\theoremstyle{plain}
\newtheorem{teo}{Theorem}[section]
\newtheorem{coro}[teo]{Corollary}
\newtheorem{lem}[teo]{Lemma}
\newtheorem{pro}[teo]{Proposition}
\newtheorem*{teonn}{Theorem}
\theoremstyle{definition}
\newtheorem{oss}[teo]{Remark}
\newcommand{\D}{\mathbb{D}}
\newcommand{\B}{\mathbb{B}}
\newcommand{\HH}{\mathbb{H}}
\newcommand{\N}{\mathbb{N}}
\newcommand{\C}{\mathbb{C}}
\newcommand{\rr}{\mathbb{R}}
\newcommand{\s}{\mathbb{S}}
\newcommand{\Z}{\mathbb{Z}}
\DeclareMathOperator{\Span}{span}
\newcommand{\clK}{\mathcal{K}}
\newcommand{\p}{\partial}
\DeclareMathOperator{\RRe}{Re}
\DeclareMathOperator{\ess}{ess}
\newcommand{\RR}{\mathbb{R}}
\newcommand{\BB}{\mathbb{B}}
\renewcommand{\SS}{\mathbb{S}}
 \newcommand{\Addresses}{{
  \bigskip
  \footnotesize

  \textsc{A.~Monguzzi,} \textsc{Dipartimento di Matematica, Universit\`a degli Studi di Milano, Via C. Saldini 50, 20133 Milano, Italy.}\par\nopagebreak
  \textit{E-mail address:} \texttt{alessandro.monguzzi@unimi.it}

  \medskip

  \textsc{G.~Sarfatti,} \textsc{Dipartimento di Matematica e Informatica ``U. Dini'', Universit\`a di Firenze, Viale \mbox{Morgagni} 67/A, 501334 Firenze, Italy.}\par\nopagebreak
  \textit{E-mail address:}  \texttt{giulia.sarfatti@unifi.it}

}}
\title{\bf Shift invariant subspaces of slice $L^2$ functions}
\author[1]{Alessandro Monguzzi\thanks{Partially supported by the 2015 PRIN grant
  \emph{Real and Complex Manifolds: Geometry, Topology and Harmonic Analysis}  
  of the Italian Ministry of Education (MIUR)}, Giulia Sarfatti \thanks{Partially supported by INDAM-GNSAGA, by the 2014 SIR grant {\em Analytic Aspects in Complex and Hypercomplex Geometry} and by Finanziamento Premiale FOE 2014 {\em Splines for accUrate NumeRics: adaptIve models for Simulation Environments} of the Italian Ministry of Education (MIUR)}}
\date{}
\begin{document}

\maketitle

\begin{abstract}
In this paper we characterize the closed invariant subspaces for the ($*$-)multiplier operator of the quaternionic space of slice $L^2$ functions. As a consequence, we obtain the inner-outer factorization theorem for the quaternionic Hardy space on the unit ball and we provide a characterization of quaternionic outer functions in terms of cyclicity.
\\{\noindent \scriptsize \sc Key words and
phrases:} {\scriptsize{\textsf { functions of a quaternionic variable; invariant subspaces; inner-outer factorization}}}\\{\scriptsize\sc{\noindent Mathematics
Subject Classification:}}\,{\scriptsize \,30G35, 30H10, 30J05} 
\end{abstract}

\section{Introduction and main results}

\medskip

Given a Hilbert space $\mathcal H$ and a bounded operator $T:\mathcal H\to\mathcal H$ it is a classical and challenging problem to investigate and characterize the closed invariant subspaces for the operator $T$. We say that a closed subspace $\mathcal K\subseteq \mathcal H$ is invariant for $T$ if $T\mathcal K\subseteq \mathcal K$.

Let $\D=\{z\in\C:|z|<1\}$ denote the unit disc in the complex plane $\C$ and consider the Hilbert space $\mathcal H= L^2(\p \D)$, that is, the space of square-integrable functions on the unit circle. Then, the invariant subspaces for the multiplier operator  
$$
M_z(f)(z):=zf(z)
$$
are completely characterized as we shall see shortly.

Let us consider the Hardy space of the unit disc, that is, the function space
$$
H^2(\D)=\left\{f\textrm{ holomorphic in $\D$}: f(z)=\sum_{n\in\N} a_nz^n\textrm{ and }\{a_n\}_{n\in\N} \in \ell^2(\N,\C) \right\},
$$
It is a well-known fact that each function $f\in H^2(\D)$ admits a boundary value function, which we still denote by $f$, in $L^2(\p\D)$. The set of all these boundary value functions turns out to be a closed subspace of $L^2(\p \D)$ which we denote by $H^2(\p \D)$. This latter space can in turn be identified with $\ell^2(\N, \C)$, the space of square-summable sequences over the non-negative integers. It is clear that the multiplier operator $M_z$ acting on $H^2(\p\D)$ is a model for the right-shift operator $(a_0,a_1,\ldots)\mapsto(0,a_0,a_1,\ldots)$ on $\ell^2(\N, \C)$. Therefore, the invariant subspaces for $M_z$ of $H^2(\p\D)$ are frequently called shift invariant subspaces. More generally, from now on we will call (shift) invariant subspaces the closed subspaces of $L^2(\p \D)$ that are invariant for the operator $M_z$. Moreover, we say that $\clK\subseteq L^2(\p\D)$ is \emph{doubly invariant} if $M_z\clK=\clK$, whereas we say that $\clK$ is \emph{simply invariant} if $M_z\clK\subsetneq \clK$. The following theorem holds.

\begin{teonn}[{\cite[\dots]{wiener, beu, helson}}]
 Let $\mathcal K$ be a closed subspace of $L^2(\p \D)$. Then,
 \begin{itemize}
  \item[(i)] $\mathcal K$ is a doubly invariant subspace if and only if there exists a (unique) measurable set $E\subseteq\p\D$ such that $\mathcal K=\chi_E L^2(\p \mathbb D)$, where $\chi_E$ denotes the characteristic function of the set $E$;  
  \item[(ii)] $\mathcal K$ is a simply invariant subspace if and only if there exists a measurable function $\varphi$, unique up to a multiplicative constant of modulus 1, such that $|\varphi|=1$ almost everywhere on $\p\D$ and $\mathcal K=\varphi H^2(\p\D)$.
 \end{itemize}
\end{teonn}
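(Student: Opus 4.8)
The plan is to prove both statements via a single structural dichotomy. Given a closed invariant subspace $\mathcal{K}$, I would first consider the subspace $z\mathcal{K} = M_z\mathcal{K}$, which is again closed (since $M_z$ is an isometry of $L^2(\partial\D)$, being multiplication by a unimodular function), and invariant. Either $z\mathcal{K} = \mathcal{K}$ (the doubly invariant case) or $z\mathcal{K} \subsetneq \mathcal{K}$ (the simply invariant case), and I would treat these separately.

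For the doubly invariant case (i): if $M_z\mathcal{K}=\mathcal{K}$, then since $M_z$ is unitary, also $M_{\bar z}\mathcal{K} = M_z^{-1}\mathcal{K} = \mathcal{K}$, so $\mathcal{K}$ is invariant under multiplication by every trigonometric polynomial, hence — by density of the trigonometric polynomials in $C(\partial\D)$ and dominated convergence — under multiplication by every function in $L^\infty(\partial\D)$. A standard argument then shows $\mathcal{K}$ is a "module" over $L^\infty$ of this form: pick $f\in\mathcal K$, and show $\chi_{E_0}f \in \mathcal{K}$ for $E_0 = \{f\neq 0\}$ by approximating $\chi_{E_0}$ by bounded functions times $f$; letting $E$ be an essential supremum of the supports $\{f\neq 0\}$ over $f\in\mathcal K$, one checks $\mathcal K \subseteq \chi_E L^2$ and, using separability to realize $E$ from a countable family, that $\chi_E L^2 \subseteq \mathcal{K}$. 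Uniqueness of $E$ (up to null sets) is immediate since $\chi_E L^2 = \chi_{E'}L^2$ forces $\chi_E = \chi_{E'}$ a.e.

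For the simply invariant case (ii): let $\varphi \in \mathcal{K} \ominus z\mathcal{K}$ be a unit vector spanning the (one-dimensional, as I will need to argue) orthogonal complement. The key computation is that $\varphi \perp z^n\mathcal{K}$ for all $n\geq 1$, so in particular $\langle \varphi, z^n\varphi\rangle = 0$ for $n\geq 1$, and taking conjugates $\langle \varphi, z^n\varphi\rangle = 0$ for all $n\neq 0$; hence the Fourier coefficients of $|\varphi|^2$ vanish except at $0$, so $|\varphi|^2$ is a.e. constant, and after normalization $|\varphi|=1$ a.e. Next I would show $\mathcal{K} = \varphi H^2(\partial\D)$: the inclusion $\supseteq$ follows because $\varphi z^n \in \mathcal{K}$ for all $n\geq 0$ and $\mathcal{K}$ is closed; for $\subseteq$, decompose $\mathcal{K} = \bigoplus_{n\geq 0} z^n(\mathcal{K}\ominus z\mathcal{K}) \oplus \bigcap_n z^n\mathcal{K}$ (a Wold-type decomposition for the isometry $M_z|_{\mathcal K}$) and argue the residual piece $\bigcap_n z^n\mathcal{K}$ is $\{0\}$: any $g$ in it has $z^{-n}g \in \mathcal K \subseteq L^2$ for all $n$, forcing all Fourier coefficients of $g$ to vanish. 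This also shows $\dim(\mathcal K \ominus z\mathcal K)=1$, so $\varphi$ is determined up to a unimodular constant. Finally uniqueness: if $\varphi H^2 = \psi H^2$ with both unimodular, then $\bar\varphi\psi$ and $\bar\psi\varphi$ both lie in $H^2(\partial\D)$, so $\bar\varphi\psi$ is a unimodular function in $H^\infty$ whose inverse is also in $H^\infty$, hence constant.

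I expect the main obstacle to be the residual-space argument in (ii) — showing $\bigcap_n z^n\mathcal{K} = \{0\}$ cleanly — together with the bookkeeping needed to pass from "invariant under trigonometric polynomials" to "invariant under $L^\infty$" in (i) while controlling the approximation in $L^2$ rather than uniform norm; the separability of $L^2(\partial\D)$ is what makes the selection of the set $E$ in (i) legitimate, and I would flag that explicitly.
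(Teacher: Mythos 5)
The paper itself does not prove this theorem: it is quoted as classical (Wiener, Beurling, Helson, with Nikolski's book indicated for a proof), so your proposal can only be measured against the standard argument. Your overall skeleton is that standard argument: the dichotomy $z\mathcal K=\mathcal K$ versus $z\mathcal K\subsetneq\mathcal K$, the module/essential-support argument for (i), and the extraction of a wandering unit vector $\varphi\in\mathcal K\ominus z\mathcal K$ with $|\varphi|=1$ a.e.\ in (ii) are all correct in outline (in (i), to pass from trigonometric polynomials to general $L^\infty$ multipliers you should use a boundedly, a.e.\ convergent approximation such as Fej\'er means rather than mere uniform density in $C(\p\D)$, but that is a routine repair that you essentially anticipate with dominated convergence).

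The genuine gap is in how you finish (ii). Your justification that the residual space vanishes --- ``any $g\in\bigcap_n z^n\mathcal K$ has $z^{-n}g\in\mathcal K\subseteq L^2$ for all $n$, forcing all Fourier coefficients of $g$ to vanish'' --- is wrong: multiplication by $z^{-n}$ is unitary on $L^2(\p\D)$, so $z^{-n}g\in L^2(\p\D)$ holds for \emph{every} $g$ and forces nothing; you are importing the Hardy-space fact $\bigcap_n z^nH^2=\{0\}$ into $L^2$, where it has no content. Moreover, even granting a trivial residual part, the Wold decomposition only yields $\mathcal K=\bigoplus_{n\ge0}z^nW$ with $W=\mathcal K\ominus z\mathcal K$; to conclude $\mathcal K=\varphi H^2(\p\D)$ you also need $\dim W=1$, which does not ``also'' follow from the residual argument and requires its own (easy) computation. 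Both defects are repairable, and the classical proof bypasses the Wold decomposition entirely: having shown $\varphi H^2(\p\D)\subseteq\mathcal K$, take $g\in\mathcal K$ orthogonal to $\varphi H^2(\p\D)$; then $\langle g,z^n\varphi\rangle=0$ for all $n\ge0$, while for $n\ge1$ one has $z^ng\in z\mathcal K\perp\varphi$, so $\langle z^ng,\varphi\rangle=0$; hence every Fourier coefficient of $g\overline\varphi$ vanishes, so $g\overline\varphi=0$ a.e., and $|\varphi|=1$ a.e.\ gives $g=0$. If you prefer to keep your Wold route, the fixes are: $\bigcap_n z^n\mathcal K$ is doubly invariant, hence equals $\chi_FL^2(\p\D)$ by part (i), and since it sits inside $z\mathcal K$ it is orthogonal to $\varphi$, whence $0=\langle\chi_F\varphi,\varphi\rangle=\int_F|\varphi|^2$ and $F$ is null; and if $\varphi,\psi$ were orthonormal in $W$, the same Fourier computation shows all coefficients of $\overline\varphi\psi$ vanish, i.e.\ $\overline\varphi\psi=0$ a.e., contradicting $|\varphi|=|\psi|=1$ a.e., so $\dim W=1$.
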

\noindent This result is due to several mathematicians and, in addition to the works we already mentioned, we refer the reader also to \cite[Chapter I]{nikolski} for another proof of the theorem and a detailed account of its history.\\
As a corollary of the previous result, the celebrated theorem of Beurling follows.
\begin{teonn}[\cite{beu}]
Let $\mathcal K$ be a non-zero closed subspace of $H^2(\p\D)$ that is invariant for the multiplier operator $M_z$. Then there exists $\varphi\in H^2(\p \D)$ such that $|\varphi|=1$ almost everywhere on $\p \D$ and $\mathcal K=\varphi H^2(\p \D)$.
\end{teonn}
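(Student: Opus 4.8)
The plan is to derive Beurling's theorem as a corollary of the preceding Wiener--Helson characterization, since the statement as phrased is precisely the restriction of part (ii) to subspaces living inside $H^2(\p\D)$. First I would observe that a non-zero closed subspace $\mathcal K\subseteq H^2(\p\D)$ invariant for $M_z$ cannot be doubly invariant: if it were, then by part (i) we would have $\mathcal K=\chi_E L^2(\p\D)$ for some measurable $E$, but $\chi_E L^2(\p\D)\subseteq H^2(\p\D)$ forces $|E|=0$ (a non-trivial characteristic function is not in the Hardy space, as one sees from the vanishing of negative Fourier coefficients together with the fact that $\chi_E$ would have to agree a.e.\ with a function whose Fourier support is in $\N$), hence $\mathcal K=\{0\}$, contradicting non-triviality. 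Therefore $\mathcal K$ is simply invariant, and part (ii) applies: there is a measurable $\varphi$ with $|\varphi|=1$ a.e.\ on $\p\D$ and $\mathcal K=\varphi H^2(\p\D)$.

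The remaining point is to upgrade the measurable unimodular $\varphi$ to an element of $H^2(\p\D)$, i.e.\ to an inner function. Since $1\in H^2(\p\D)$, we have $\varphi=\varphi\cdot 1\in\varphi H^2(\p\D)=\mathcal K\subseteq H^2(\p\D)$, so $\varphi$ is already a boundary-value function of a holomorphic function in $\D$; combined with $|\varphi|=1$ a.e.\ this says exactly that $\varphi$ is inner. That is all that is claimed. For uniqueness up to a unimodular constant, which is implicit in the phrasing, I would note that if $\varphi_1 H^2(\p\D)=\varphi_2 H^2(\p\D)$ with both $\varphi_i$ inner, then $\varphi_1/\varphi_2$ and $\varphi_2/\varphi_1$ both lie in $H^2(\p\D)$ (each $\varphi_i$ divides the other in $\mathcal K$), and a function in $H^2(\p\D)$ whose reciprocal is also in $H^2(\p\D)$ and which is unimodular on the boundary must be a constant of modulus $1$ by the maximum principle applied to $\varphi_1/\varphi_2$.

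I do not anticipate a genuine obstacle here, because the heavy lifting is entirely contained in the Wiener--Helson theorem quoted just above; the corollary is a short deduction. The one place requiring a little care is the ruling-out of the doubly invariant case: one must justify that no nonzero $\chi_E$ can sit inside $H^2(\p\D)$, which rests on the fact that membership in $H^2(\p\D)$ means all Fourier coefficients with negative index vanish, and a characteristic function with that property on a set of positive measure would be forced (via, e.g., the $F.$~and~$M.$~Riesz theorem, or directly from $\chi_E=\chi_E^2$ and analyticity) to be constant, hence $E$ is null or all of $\p\D$; the latter gives $\mathcal K=H^2(\p\D)=1\cdot H^2(\p\D)$, which is still of the claimed form with $\varphi\equiv 1$. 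With that dichotomy settled, the identification $\mathcal K=\varphi H^2(\p\D)$ with $\varphi$ inner is immediate.
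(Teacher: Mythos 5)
Your deduction is correct and is essentially the paper's own route: the paper offers no separate proof of this statement, presenting it precisely as a corollary of the Wiener--Helson theorem quoted just above, which is exactly the reduction you carry out (rule out the doubly invariant case, apply part (ii), and note $\varphi=\varphi\cdot 1\in\mathcal K\subseteq H^2(\p\D)$, so $\varphi$ is inner). One small correction to your final paragraph: the case $E=\p\D$ would give $\mathcal K=\chi_E L^2(\p\D)=L^2(\p\D)\not\subseteq H^2(\p\D)$, not $\mathcal K=H^2(\p\D)$, so that case is simply impossible (equivalently, $H^2(\p\D)$ has no nonzero doubly invariant subspace since $\bigcap_{n\geq 0}z^nH^2(\p\D)=\{0\}$), which only streamlines your argument.
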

Since Beurling's paper the problem of characterizing the invariant subspaces of spaces of holomorphic functions different from the Hardy space has been challenged by several mathematicians. Among others, we recall the works \cite{richter,luo-richter}, where the case of the Dirichlet space of the unit disc is studied, the work \cite{aleman-richter-sundberg} for the Bergman space case and \cite{richter-sunkes, arcozzi-levi} for the case of the Drury--Arveson space of the unit ball.  

In this paper we work in the quaternionic setting and we study the invariant subspaces of the space of slice $L^2$ functions. \\
Let $\HH$ denotes the skew field of quaternions, let $\BB=\{q\in\HH:\ |q|<1\}$ be the quaternionic unit ball and
let $\partial \BB$ be its boundary, containing elements of the form $q=e^{tI},\ I\in\SS,\ t\in\RR$, where $\SS=\{q\in \HH : q^2=-1\}$ is the two dimensional sphere of imaginary units in $\HH$. Then,
\[ \HH=\bigcup_{I\in \s}(\rr+\rr I),  \hskip 1 cm \rr=\bigcap_{I\in \s}(\rr+\rr I),\]
where the ``slice'' $L_I:=\rr+\rr I$ can be identified with the complex plane $\C$ for any $I\in\s$.  \\
We endow $\partial\BB$ with the measure
\begin{equation}\label{Sigma}
d\Sigma\left(e^{tI}\right)=d\sigma(I)dt,
\end{equation}
where $dt$ is the Lebesgue measure on $\RR$ and $d\sigma$ is the standard area element of $\s$, normalized so that $\sigma(\s)=\Sigma(\p\B)=1$. The measure $d\Sigma$ is naturally associated with the Hardy space $H^2(\BB)$ of slice regular functions on $\BB$, see \cite{deFGS, metrica}. 

We recall that a function $f:\BB\to\HH$ is a \emph{slice regular function} if the restriction $f_I$ of $f$ to $\BB\cap L_I$
is {holomorphic}, i.e., it has continuous partial derivatives and it is such that
\[\overline{\p}_If_I(x+yI)=\frac{1}{2}\left(\frac{\p}{\p x}+I\frac{\p}{\p y}\right)f_I(x+yI)=0\]
for all $x+yI\in \BB
\cap L_I$.
The class of slice regular functions was introduced in \cite{gentili-struppa} and we refer the reader to the monograph \cite{libroGSS} for an introduction to this topic.

The class of slice functions was introduced few years later by R. Ghiloni and A. Perotti \cite{ghiloniperotti} in a more general setting than the one of this work.  In this paper we adopt as definition of slice functions the following one. Given a function $f:\p\BB\to\HH$ we say that $f$ is a \emph{slice function} if, for any $J, K\in\SS$, $J\neq K$, 
and any $x+yI\in \p\B$, it holds
\begin{equation}\label{repfor}
 f(x+yI)=\big((J-K)^{-1}J+I(J-K)^{-1}\big)f(x+yJ)-\big((J-K)^{-1}K+I(J-K)^{-1}\big)f(x+yK).
\end{equation}
For the particular choice of $K=-J$ the above formula reduces to
\begin{equation}\label{repfor2}
\begin{aligned}
f(x+yI)&=\frac{1-IJ}{2}f(x+yJ)+\frac{1+IJ}{2}f(x-yJ).
\end{aligned}
\end{equation}
It is well-known that a slice regular function $f:\B\to \HH$ satisfies these representation formulas on the whole unit ball $\BB$; see \cite{libroGSS}.

We now recall the definition of slice $L^p$ spaces. These spaces were introduced by the second author in \cite{sarfatti-indiana}. We focus here only on the Hilbert case $p=2$ and the case $p=\infty$ since it is enough for the purposes of this work. Let us consider the space of $\Sigma$-measurable functions
$$
L^2(\p\B)=\left\{f:\p\B\to\HH: \|f\|^2_2=\int_{\p\B}|f|^2\,d\Sigma<\infty\right\}.
$$
As usual, the functions in $L^2(\p \B)$ that coincide $\Sigma$-almost everywhere are identified.\\
Then, the space of slice $L^2$ functions, which from now on we denote by $L^2_s(\p\B)$, is the closed subspace of $L^2(\p\B)$ consisting of slice functions. 
In \cite{sarfatti-indiana} it is proved that 
$$
L^2_s(\p\B)=\bigg\{f\in L^2(\p\B)\, : \,f(q)= \sum_{n\in \Z}q^n a_n \, \textrm{for $\Sigma$-almost every q}, \{a_n\}_{n\in \Z} \in \ell^2(\Z, \HH) \bigg\}.
$$
In particular, $ L^2_{s} (\p\B)$ endowed with the inner 
product  
\[ \Big \langle \sum_{n\in \Z}q^n a_n,\sum_{n\in \Z}q^n b_n\Big \rangle_{ L^2_{s} (\p\B)}:=\sum_{n\in \Z}\bar b_n a_n\]
is a quaternionic right Hilbert space and $\{q^n\}_{n\in\Z}$ is an orthonormal basis with respect to this inner product. 
Moreover, given any $I\in\s$, the inner product of $L^2_s(\p\B)$ has the integral representation
\begin{equation*}\label{sliceinnerproduct}
 \left<f,g\right>_{L^2_s(\p\B)}=\frac{1}{2\pi}\int_{0}^{2\pi}\overline{g(e^{I\theta})}f(e^{I\theta})d\theta.
\end{equation*}

\noindent Similarly, we define $L^\infty_s(\p\B)$ to be the space of $\Sigma$-measurable slice functions that are essentially bounded, that is, the space of slice functions such that
$$
\|f\|_{L^\infty_s(\p\B)}:=\ess\sup_{q\in\p\B}|f(q)|<\infty,
$$
where the essential supremum is taken with respect to the measure $\Sigma$,
$$
\ess\sup_{q\in\p\B} |f(q)|=\inf\{\lambda\geq 0:\Sigma(\{q\in\p\B:|f(q)|>\lambda\})=0\}.
$$

Before stating our results, we need a few more definitions. If $f(q)=\sum_{n\in\Z}q^na_n$ is a slice function on $\p\B$, then the \emph{conjugate} of $f$ is the defined as
\begin{equation}\label{regular-conjugate}
 f^c(q):=\sum_{n\in\Z}q^n\overline{a_n}.
\end{equation}
Morevover, we denote by $\widetilde{f}$ the function 
\begin{equation*}\label{function-tilde}
 \widetilde f(q):=f(\overline q).
\end{equation*}
In general, the pointwise product of two slice (or slice regular) functions is not a slice function, thus a suitable product must be considered, namely, the so-called \emph{slice} or $\ast$-\emph{product}. If $f(q)=\sum_{n\in\Z} q^n a_n$ and $g(q)=\sum_{n\in\Z} q^n b_n$ are two slice functions on $\p\B$, then
$$
f\ast g(q):=\sum_{n\in\Z}q^n\sum_{k\in\Z}a_k b_{n-k}.
$$
This product is related to the pointwise product by the formula
$$
f* g(q)=\left\{\begin{array}{l r}
0 & \text{if $f(q)=0 $}\\
f(q)g(T_{f^c}(q)) & \text{if $f(q)\neq 0 $\,,}
                \end{array}\right.
$$
where $T_{f^c}(q):= f(q)^{-1}q f(q)$. We refer the reader to \cite{libroGSS,sarfatti-indiana} for a proof of this fact.

We denote by $f^{-\ast}$ the inverse of $f$ with respect to the $\ast$-product. Clearly, the function $f^{-\ast}$ is not necessarily defined wherever $f$ is. We refer the reader to the introduction of Section \ref{prelim} for a precise definition of $f^{-\ast}$.

Similarly to the classical setting, it is natural to investigate the invariant subspaces of $L^2_s(\p\B)$ for the ($*$-)multiplier operator $$
M_qf=q\ast f.
$$
The $\ast$-product is clearly non-commutative; nonetheless, by direct computations, it follows
$$
M_qf=q\ast f= f\ast q=qf. 
$$
Then, our first result is a complete characterization of the doubly invariant subspaces, that is, of those closed subspaces $\clK\subseteq L^2_s(\p\B)$ such that $M_q\clK=\clK$.

\begin{teo}\label{thm-doubly-invariant}
	A closed subspace $\clK \subseteq L^2_s(\partial \B)$ is a doubly invariant subspace if and only if there exists a unique function $\varphi\in L^2_s(\p\B)$ such that $\clK=\varphi*L_s^2(\partial\B)$ and $\varphi$ satisfies $\widetilde{\varphi}*\varphi^c=\widetilde{\varphi}$. 	\end{teo}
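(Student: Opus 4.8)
The plan is to adapt Wiener's proof of the classical doubly invariant case, with ordinary multiplication replaced by the $*$-product and with attention paid to the non-commutativity of $\HH$ and to the reflection $n\mapsto-n$ of Fourier indices. First note that $M_q$ is unitary on $L^2_s(\p\B)$, since it carries the orthonormal basis $\{q^n\}_{n\in\Z}$ bijectively onto itself by the shift $q^n\mapsto q^{n+1}$; in particular $M_q^{-1}=M_q^*$. If $\clK$ is doubly invariant, then $M_q\clK=\clK=M_q^{-1}\clK$ forces also $M_q\clK^\perp=\clK^\perp$, so the orthogonal projection $P\colon L^2_s(\p\B)\to\clK$ commutes with $M_q$, with $M_q^{-1}$, and hence with $M_q^n$ for every $n\in\Z$.

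For necessity, set $\varphi:=P(1)\in L^2_s(\p\B)$. Using that $P$ is right $\HH$-linear and commutes with the powers of $M_q$, for every trigonometric polynomial $f=\sum_nq^na_n$ one computes $P(f)=\sum_nM_q^n(\varphi)\,a_n=\sum_n(q^n*\varphi)\,a_n=\varphi*f$, the last equality being an elementary rearrangement of the coefficient convolution. Since $P$ is bounded and the polynomials are dense, the a priori only densely defined operator $L_\varphi\colon f\mapsto\varphi*f$ extends to a bounded operator which equals $P$; passing to the limit on Fourier coefficients gives $P(f)=\varphi*f$ for every $f\in L^2_s(\p\B)$ (the convolution then automatically lying in $\ell^2$), whence $\clK=\operatorname{ran}P=\varphi*L^2_s(\p\B)$, in particular closed. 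Next I would record, by short computations on Fourier coefficients (recall $(\widetilde\psi)_n=\psi_{-n}$ and $(\psi^c)_n=\overline{\psi_n}$), the identities $L_\psi L_\chi=L_{\psi*\chi}$ and $L_\psi^{*}=L_{\widetilde\psi^{\,c}}$ (the latter on the common domain of trigonometric polynomials). Since $P=L_\varphi$ is a self-adjoint idempotent, evaluating $L_\varphi^2=L_\varphi$ and $L_\varphi^{*}=L_\varphi$ at the constant $1$ gives $\varphi*\varphi=\varphi$ and $\widetilde\varphi^{\,c}=\varphi$ (equivalently $\widetilde\varphi=\varphi^c$); hence $\widetilde\varphi*\varphi^c=\varphi^c*\varphi^c=(\varphi*\varphi)^c=\varphi^c=\widetilde\varphi$, using $(f*g)^c=g^c*f^c$.

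For sufficiency and uniqueness, start from $\varphi\in L^2_s(\p\B)$ satisfying $\widetilde\varphi*\varphi^c=\widetilde\varphi$. Applying the operations $\psi\mapsto\psi^c$ and $\psi\mapsto\widetilde\psi$ to this identity — which on $\ell^2$ coefficient sequences only conjugate entries and reflect indices, and which commute with the (absolutely convergent) coefficient convolution — recovers $\widetilde\varphi=\varphi^c$ and $\varphi*\varphi=\varphi$. On trigonometric polynomials $L_\varphi$ is then idempotent ($L_\varphi^2=L_{\varphi*\varphi}=L_\varphi$) and symmetric ($\langle L_\varphi f,g\rangle=\langle f,L_{\widetilde\varphi^{\,c}}g\rangle=\langle f,L_\varphi g\rangle$), so $\|L_\varphi f\|^2=\langle L_\varphi f,L_\varphi f\rangle=\langle f,L_\varphi^2 f\rangle=\langle f,L_\varphi f\rangle\le\|f\|\,\|L_\varphi f\|$; hence $L_\varphi$ is a contraction, extending to a bounded self-adjoint idempotent on $L^2_s(\p\B)$, i.e. an orthogonal projection. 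Consequently $\clK:=\varphi*L^2_s(\p\B)=\operatorname{ran}L_\varphi$ is closed, and it is doubly invariant because $M_q$ commutes with $L_\varphi$ (as $q*\varphi=\varphi*q$) and is surjective, so $M_q\clK=L_\varphi M_q(L^2_s(\p\B))=L_\varphi(L^2_s(\p\B))=\clK$. Uniqueness follows since any $\varphi$ attached to $\clK$ in this way makes $L_\varphi$ the orthogonal projection onto $\clK$, so that $L_\varphi=P$ and $\varphi=P(1)$ is determined by $\clK$.

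The main obstacle I expect is the necessity step: identifying the abstract projection $P$ with left $*$-multiplication by $\varphi=P(1)$, which has to be pushed through despite the non-commutativity of the $*$-product and the fact that $L_\varphi$ is only densely defined to begin with; in particular, one must get the adjoint right, $L_\psi^{*}=L_{\widetilde\psi^{\,c}}$, where both the quaternionic conjugation of coefficients and the reflection $n\mapsto-n$ (hence the operation $\psi\mapsto\widetilde\psi$, not merely $\psi\mapsto\psi^c$) intervene. Once these coefficient-level identities are in place, the remaining verifications are routine.
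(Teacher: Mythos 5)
Your proposal is correct, but it follows the classical Wiener--Helson commutant route rather than the paper's argument, and the two differ in where the key analytic facts come from. In the necessity direction the paper also sets $\varphi=P_\clK(1)$, but it only extracts the coefficient identity from $\langle M_q^k\varphi,1-\varphi\rangle=0$ and then proves $\clK=\varphi*L^2_s(\p\B)$ by a separate orthogonality argument, which presupposes that $M_\varphi$ is bounded and self-adjoint; boundedness is obtained by invoking the structure theorem for slice idempotents (Theorem \ref{thm-idem}), which forces $\varphi\in L^\infty_s(\p\B)$. You instead identify $P_\clK$ with $L_\varphi$ on the dense set of trigonometric polynomials using the commutation with $M_q^{\pm1}$ and extend by continuity, so that $\clK=\operatorname{ran}P=\varphi*L^2_s(\p\B)$ and the conditions $\varphi*\varphi=\varphi$, $\widetilde\varphi^{\,c}=\varphi$ drop out of $P$ being a self-adjoint idempotent, with no appeal to Theorem \ref{thm-idem}. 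Similarly, in the sufficiency direction the paper again routes through Theorem \ref{thm-idem} to get $\varphi\in L^\infty_s(\p\B)$ and hence boundedness of $M_\varphi$, whereas you obtain boundedness abstractly from the contraction estimate $\|L_\varphi f\|^2=\langle f,L_\varphi f\rangle\le\|f\|\,\|L_\varphi f\|$; and your uniqueness argument (a bounded self-adjoint idempotent with range $\clK$ must equal $P_\clK$, so $\varphi=P_\clK(1)$) replaces the paper's explicit computation leading to $\varphi_1=\varphi_1*\varphi_2=\varphi_2$. Your route is more elementary and self-contained; the paper's buys the extra information that $\varphi$ is automatically bounded, with an explicit spherical description, using machinery (the idempotent theorem and the adjoint formula $M_\varphi^\dagger=M_{\widetilde\varphi^{\,c}}$) that it develops anyway. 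One point to tighten: in the contraction estimate the symmetry relation is applied with second argument $g=L_\varphi f$, which is not a polynomial, so you should check $\langle L_\varphi f,g\rangle=\langle f,L_{\widetilde\varphi^{\,c}}g\rangle$ for $f$ a polynomial and arbitrary $g\in L^2_s(\p\B)$ (immediate from Cauchy--Schwarz and absolute convergence of the double coefficient sum), and likewise use the associativity $(\varphi*\varphi)*f=\varphi*(\varphi*f)$ only for finitely supported $f$ before extending by continuity; these are exactly the routine verifications you flag, not gaps.
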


\noindent Notice that $\varphi\in L^2_s(\p\B)$ satisfies $\widetilde{\varphi}*\varphi^c=\widetilde{\varphi}$ if and only if $\varphi$ satisfies the conditions

$$
\left\{\begin{array}{l r}
\varphi\ast \varphi=\varphi \\
\widetilde \varphi^c=\varphi. 
                \end{array}\right.
$$
In fact, $\widetilde{\varphi}*\varphi^c=\widetilde{\varphi}$ if and only if
\[ \varphi*\widetilde{\varphi}^c=(\widetilde{\varphi}*\varphi^c)^c=\widetilde\varphi^c
\quad \text{ and } \quad \varphi*\widetilde\varphi^c=\widetilde{\widetilde{\varphi}*\varphi^c}=\varphi.\]
As we will see, the properties of the function $\varphi$ are the ones needed to guarantee that the multiplier operator $M_\varphi f:=\varphi\ast f$ is a projection, that is, $M_\varphi^2=M_\varphi$, and is self-adjoint on $L^2_s(\p\B)$.

The characterization of the doubly invariant subspaces is more explicit thanks to the following result. 
\begin{teo}\label{thm-idem}
	Let $f\in L^2_s(\p\B)$ be such that $f*f=f$. Then on each sphere $e^{t\s}\subseteq\p\B$ the function $f$ behaves as follows. If $f|_{e^{t\s}}$ denotes the restriction of $f$ to the sphere $e^{t\s}$, then $f|_{e^{t\s}}$ is either constant $0$ or $1$, 
	or there exist $J=J(t),K=K(t) \in\s, J\neq K$ such that 
	$$
	f(e^{tI})=-\big((J-K)^{-1}K+I(J-K)^{-1}\big)
	$$
for any $e^{tI}\in e^{t\s}$. Moreover, if $f$ satisfies also $\widetilde f^c=f$, then $K=-J$.

\end{teo}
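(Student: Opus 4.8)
The plan is to convert the functional equation $f\ast f=f$ into a pointwise algebraic identity on spheres. For $\Sigma$-a.e.\ $t$ (a full-measure set of $t$ that we may take to avoid the degenerate values $t\equiv0,\pi$) the $L^{2}$ boundary datum $f$ restricts to a genuine function on the sphere $e^{t\s}$, and by the representation formula~\eqref{repfor2} this restriction is affine in the imaginary unit: $f(e^{tI})=\alpha+I\beta$ for suitable $\alpha=\alpha(t),\,\beta=\beta(t)\in\HH$.

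Next I would substitute the pointwise product formula $f\ast f(q)=f(q)\,f\big(f(q)^{-1}qf(q)\big)$. The key point is that $f(q)^{-1}qf(q)$ lies on the same sphere as $q=e^{tI}$, equal to $e^{tI^{*}}$ with $I^{*}=f(q)^{-1}If(q)\in\s$; applying the affine form again gives $f(e^{tI^{*}})=\alpha+I^{*}\beta$, and the conjugating factors telescope, so that $f\ast f(e^{tI})=f(e^{tI})\alpha+I\,f(e^{tI})\beta$. Imposing $f\ast f=f$ and abbreviating $w=f(e^{tI})=\alpha+I\beta$, this becomes $w\alpha+Iw\beta=w$, that is,
\[
(\alpha^{2}-\beta^{2})+I(\alpha\beta+\beta\alpha)=\alpha+I\beta
\qquad\text{for every }I\in\s\text{ with }\alpha+I\beta\neq0 .
\]
Unless $\beta=0$, the set $\{I\in\s:\alpha+I\beta=0\}$ has at most one element, so the identity holds at two distinct $I$'s; subtracting eliminates the $I$-free terms and leaves the system
\[
\alpha^{2}-\beta^{2}=\alpha,\qquad \alpha\beta+\beta\alpha=\beta .
\]

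The heart of the proof is the classification of the solutions of this system in $\HH\times\HH$. If $\beta=0$, then $\alpha^{2}=\alpha$, hence $\alpha\in\{0,1\}$ since $\HH$ has no zero divisors; these are the constant cases $f|_{e^{t\s}}\equiv0$ and $\equiv1$. If $\beta\neq0$, left-multiplying the second relation by $\beta^{-1}$ gives the conjugation identity $\beta^{-1}\alpha\beta=1-\alpha$, and comparing real parts forces $\RRe\alpha=\tfrac12$. Writing $\alpha=\tfrac12+\alpha_{0}$ with $\alpha_{0}\in\IIm\HH$, the first relation becomes $\beta^{2}=\alpha^{2}-\alpha=-(\tfrac14+|\alpha_{0}|^{2})<0$, so $\beta$ is purely imaginary with $|\beta|=|\alpha|$; and the conjugation identity reduces to $\beta^{-1}\alpha_{0}\beta=-\alpha_{0}$, which---both sides being imaginary---means $\alpha_{0}\perp\beta$. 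With these facts one checks directly that $K:=\beta^{-1}\alpha$ and $J:=\beta^{-1}(\alpha-1)$ are distinct imaginary units (both have modulus $1$, and both have vanishing real part because $\alpha_{0}\perp\beta$) and that
\[
-\big((J-K)^{-1}K+I(J-K)^{-1}\big)=\beta K+I\beta=\alpha+I\beta ,
\]
which is exactly the claimed form. For the final assertion, a short computation shows $\widetilde f^{c}(e^{tI})=\bar\alpha-I\bar\beta$ on each sphere, so $\widetilde f^{c}=f$ is equivalent to $\alpha$ being real and $\beta$ imaginary; together with $\RRe\alpha=\tfrac12$ this forces $\alpha=\tfrac12$ and $|\beta|=\tfrac12$, whence $\beta^{-1}=-4\beta$ and $K=-2\beta=-J$.

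I expect the main obstacle to be organisational rather than conceptual: one must be careful that the pointwise product and representation formulas are only applied on $\Sigma$-a.e.\ sphere, where the boundary datum is an honest function, and the exceptional units $I$ with $\alpha+I\beta=0$ must be set aside when passing from the affine identity to the two scalar relations. The single computational step is the classification, and the device that makes it painless is to replace the relation $\alpha\beta+\beta\alpha=\beta$ by the conjugation identity $\beta^{-1}\alpha\beta=1-\alpha$, from which $\RRe\alpha=\tfrac12$, the imaginariness of $\beta$, and the orthogonality $\alpha_{0}\perp\beta$ all fall out at once.
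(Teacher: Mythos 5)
Your proof is correct, but it follows a genuinely different route from the paper's. The paper argues geometrically: using the symmetrization it first shows that on a sphere where $f^s\not\equiv 0$ one must have $f\equiv 1$; on a sphere where $f^s\equiv 0$ either $f\equiv 0$ or $f$ has a single isolated zero $e^{tJ}$, and then the pointwise identity $f(q)=f(q)f(T_{f^c}(q))$ shows $f$ takes the value $1$ at exactly one point $e^{tK}$ of the same sphere (the isolated zero of $f^c$); the claimed expression is then obtained by writing the representation formula \eqref{repfor} through the two interpolation points $e^{tJ}$ and $e^{tK}$, and $\widetilde f^c=f$ forces $K=-J$. You instead parametrize $f$ on each sphere as $\alpha+I\beta$ via \eqref{repfor2}, use the same pointwise $*$-product formula to turn $f*f=f$ into the quaternionic system $\alpha^2-\beta^2=\alpha$, $\alpha\beta+\beta\alpha=\beta$, and classify its solutions, producing $K=\beta^{-1}\alpha$, $J=\beta^{-1}(\alpha-1)$ explicitly (your computations check out: the subtraction trick legitimately isolates the two relations since the exceptional set $\{I:\alpha+I\beta=0\}$ is at most a point, $\beta^{-1}\alpha\beta=1-\alpha$ gives $\RRe\alpha=\tfrac12$, $\beta^2<0$ gives $\beta$ imaginary with $|\beta|=|\alpha|$, and one verifies $f(e^{tJ})=0$, $f(e^{tK})=1$, consistent with the paper). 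What each approach buys: the paper's argument identifies $J$ and $K$ intrinsically as the zero of $f$ and the point where $f=1$, with minimal computation; your algebraic classification delivers in one stroke the finer structural information of the paper's Theorem \ref{thm-idem-2} (real part $\tfrac12$, imaginary $\beta$, the orthogonality condition) together with closed formulas for $J,K$ in terms of the affine coefficients, and handles the $\widetilde f^c=f$ refinement by a one-line computation. Both arguments ultimately rest on the same two tools, the representation formula and the pointwise formula for the $*$-product, and your caveat about working on $\Sigma$-a.e.\ sphere is harmless (the paper implicitly works with the slice representative and argues spherewise in the same way).
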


The converse of Theorem \ref{thm-idem} holds as well as can be easily checked by direct computations. Therefore, we have a characterization of slice $L^2$ idempotent functions. In particular, these functions are necessarily bounded. We point out that from this characterization we also deduce that the function $\varphi$ in Theorem \ref{thm-doubly-invariant} is not a characteristic function as in the classical setting. We refer the reader to the examples in Section \ref{prelim}.

The proof of Theorem \ref{thm-idem} exploits the Representation Formula \eqref{repfor}. The simplified Representation Formula \eqref{repfor2} could be used as well and the characterization theorem should be reformulated accordingly; see Theorem \ref{thm-idem-2}.

We now focus on the simply invariant subspaces of $L^2_s(\p\B)$, that is, those closed subspaces $\clK\subseteq L^2_s(\p\B)$ such that $M_q\clK\subsetneq \clK$. Let $H^2(\p\B)$ denote the Hardy space 
$$
H^2(\p\B):=\left\{f\in L_s^2(\p\B)\,: \,f(q)= \sum_{n\in \N}q^n a_n \,,\{a_n\}_{n\in \N} \in \ell^2(\N, \HH) \right\}.
$$
Then, $H^2(\p\B)$ is a closed subspace of $L^2_s(\p\B)$ which can be identified with the quaternionic Hardy space of slice regular functions $H^2(\B)$ ; see, e.g., \cite{libroMilanesi, deFGS}. When we write $f\in H^2(\p\B)$ it will always be implicit that $f$ actually is the boundary value function of a slice regular regular function defined on the whole unit ball $\B$. Having this in mind, if $f\in H^2(\p\B)$, then the expression $f(q)$ for $|q|<1$ is meaningful.

We prove the following result.
\begin{teo}\label{thm-simply-invariant}
	A closed subspace $\clK\subseteq L^2_s(\p\B)$ is a simply invariant subspace if and only if $\clK=\varphi * H^2(\partial \B)$ for some function $\varphi \in L^{\infty}_s(\partial \B)$ such that $|\varphi|=1$ $\Sigma$-almost everywhere on $\partial \B$. 
	
	Moreover, such a function is unique up to a unitary constant if the following sense: if $\clK=\varphi_1\ast H^2(\p\B)= \varphi_2\ast H^2(\p\B)$, then $\varphi_2=\varphi_1\ast u$ for some $u\in\HH$ such that $|u|=1$.
\end{teo}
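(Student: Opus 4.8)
The plan is to mimic the classical Wiener/Beurling/Helson proof, adapting it to the $*$-product and to the quaternionic Hilbert module structure. First I would fix a simply invariant subspace $\clK$ and consider the closed subspace $\mathcal{W}:=\clK\ominus M_q\clK$, the orthogonal complement of $M_q\clK$ inside $\clK$; since $M_q\clK\subsetneq\clK$ this is non-zero. The key first step is to show that $M_q$ acts as a (right-linear) isometry on $L^2_s(\p\B)$ — which is immediate since $\{q^n\}_{n\in\Z}$ is an orthonormal basis and $M_q$ shifts it — and that consequently the subspaces $M_q^n\mathcal{W}$, $n\geq 0$, are pairwise orthogonal and all contained in $\clK$. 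Then one must prove the ``wandering subspace'' decomposition $\clK=\bigoplus_{n\geq 0}M_q^n\mathcal{W}$: the inclusion $\supseteq$ is clear, and for $\subseteq$ one argues that $\bigcap_{n\geq 0}M_q^n\clK=\{0\}$ — here I would use that $f\in M_q^n\clK$ forces the first $n$ Fourier coefficients (in the $q^k$ expansion) of $f$ to vanish, so the intersection contains only $0$ — and then a standard Hilbert-space argument gives the orthogonal sum.

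The next step is to identify $\mathcal{W}$ as a one-dimensional right submodule generated by a unimodular function. Pick $\varphi\in\mathcal{W}$ with $\|\varphi\|_2=1$. Because $\varphi\perp M_q^n\varphi$ for all $n\geq 1$, and also (applying the same to $M_q^m\varphi$) $M_q^m\varphi\perp M_q^n\varphi$ for $m\neq n$, one computes the Fourier coefficients of the slice function $\varphi^c*\varphi$ (or equivalently uses the integral representation $\langle f,g\rangle=\frac{1}{2\pi}\int_0^{2\pi}\overline{g(e^{I\theta})}f(e^{I\theta})\,d\theta$ on a fixed slice) and finds that all the nonzero-index coefficients vanish while the $0$-th coefficient is $1$; this says $|\varphi|=1$ $\Sigma$-almost everywhere, hence $\varphi\in L^\infty_s(\p\B)$. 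One then checks $\mathcal{W}=\varphi*\HH$ (the right module generated by $\varphi$ over the constants): any $\psi\in\mathcal{W}$ is orthogonal to $M_q^n\varphi$, $n\neq 0$, and lies in $\clK\ominus M_q\clK$, and the same coefficient computation shows $\varphi^c*\psi$ is a constant, so $\psi=\varphi*c$. Combining with the wandering decomposition, $\clK=\bigoplus_{n\geq 0}M_q^n(\varphi*\HH)=\overline{\varphi*\left(\bigoplus_{n\geq 0}q^n\HH\right)}=\varphi*H^2(\p\B)$, where in the last equality one must be slightly careful that $*$-multiplication by the fixed bounded unimodular slice function $\varphi$ is bounded on $L^2_s(\p\B)$ (using the pointwise product formula $\varphi*f(q)=\varphi(q)f(T_{\varphi^c}(q))$ with $|\varphi|=1$) so it intertwines the dense span with its closure.

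For the converse direction one checks directly that if $\varphi\in L^\infty_s(\p\B)$ with $|\varphi|=1$ a.e., then $\varphi*H^2(\p\B)$ is closed (again because $f\mapsto\varphi*f$ is an isometry onto its image, by the pointwise formula and unimodularity) and $M_q(\varphi*H^2(\p\B))=\varphi*(q*H^2(\p\B))=\varphi*(qH^2(\p\B))\subsetneq\varphi*H^2(\p\B)$, so it is simply invariant. Finally, for uniqueness: if $\varphi_1*H^2(\p\B)=\varphi_2*H^2(\p\B)$ then $\varphi_1\in\varphi_2*H^2(\p\B)$ and $\varphi_2\in\varphi_1*H^2(\p\B)$, so $\varphi_1=\varphi_2*h_1$ and $\varphi_2=\varphi_1*h_2$ with $h_1,h_2\in H^2(\p\B)$; then $\varphi_1=\varphi_1*h_2*h_1$, and since $f\mapsto\varphi_1*f$ is injective this forces $h_2*h_1=1$, while $|h_1|=|h_2|=1$ a.e. on $\p\B$ (from $|\varphi_1|=|\varphi_2|=1$); a slice regular function whose boundary values are unimodular and whose $*$-inverse is also in $H^2$ must be a unimodular constant — this is the quaternionic analogue of the fact that an inner function with inner inverse is constant, and I would prove it by noting $h_1$ and $h_1^{-*}=h_2$ both lie in $H^2$, so $\|h_1\|_2=1=\|h_1\|_{\infty}$ forces $h_1$ constant.

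The main obstacle I expect is the bookkeeping around non-commutativity of the $*$-product and the right-module structure: one has to be consistent about whether constants and coefficients multiply on the left or the right, verify that $M_q$ is genuinely a right-module isometry, and justify that $*$-multiplication by a bounded unimodular slice function is a well-defined bounded (indeed isometric) operator on all of $L^2_s(\p\B)$ rather than just on nice dense subsets — the pointwise product formula with $T_{f^c}$ is only stated where $f(q)\neq 0$, so some care is needed to see it extends and preserves the $L^2$ norm when $|\varphi|=1$ a.e. Everything else is a faithful transcription of the classical argument, with the integral representation of the inner product on a single slice $L_I$ doing the work that the circle $\p\D$ does classically.
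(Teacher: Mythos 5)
There is a genuine gap in your main direction, at the passage from the wandering subspace $\mathcal{W}=\clK\ominus M_q\clK$ to the decomposition $\clK=\bigoplus_{n\geq 0}M_q^n\mathcal{W}$. Your justification that $\bigcap_{n\geq 0}M_q^n\clK=\{0\}$ --- namely that $f\in M_q^n\clK$ forces the first $n$ Fourier coefficients of $f$ to vanish --- is false in this setting: elements of $\clK\subseteq L^2_s(\p\B)$ have two-sided expansions $\sum_{n\in\Z}q^na_n$, and $M_q^n$ merely shifts the coefficients, it does not truncate them. That vanishing-coefficient argument is valid only for subspaces of $H^2(\p\B)$, i.e.\ for the Beurling corollary (Theorem \ref{thm-Beurling}), not for the $L^2_s$ theorem you are proving; a priori the Wold decomposition of the isometry $M_q|_{\clK}$ has a residual part $\bigcap_n M_q^n\clK$, which is doubly invariant, and showing it vanishes is essentially as hard as the theorem itself (it is known to be trivial only a posteriori, once $\clK=\varphi* H^2(\p\B)$). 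The paper avoids this entirely: having produced a unit-norm $\varphi\in\clK\ominus M_q\clK$ with $|\varphi|=1$ $\Sigma$-a.e.\ (your coefficient computation here is the right one, though in the quaternionic setting passing from $\varphi^c*\widetilde{\varphi}=1$ to $|\varphi|=1$ a.e.\ requires Lemma \ref{modulo}, not just the single-slice integral formula, because of noncommutativity), it proves $\clK=\varphi*H^2(\p\B)$ directly: if $f\in\clK$ is orthogonal to $\varphi*H^2(\p\B)$, then $\langle M_\varphi^\dagger f,q^k\rangle_{L^2_s(\p\B)}=0$ for $k\geq 0$ by that orthogonality, and for $k<0$ because $\langle M_q^{-k}f,\varphi\rangle_{L^2_s(\p\B)}=0$ (here $M_q^{-k}f\in M_q\clK$ and $\varphi\perp M_q\clK$, which uses $f\in\clK$); hence $\widetilde{\varphi}^c*f=0$ and $f=\varphi*\widetilde{\varphi}^c*f=0$. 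This needs neither the wandering decomposition nor the one-dimensionality of $\mathcal{W}$, so the correct fix is to replace your orthogonal-sum argument by this completeness argument.

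Two smaller points. In the uniqueness step, ``$\|h_1\|_2=1=\|h_1\|_\infty$ forces $h_1$ constant'' is not a proof: every inner function satisfies both equalities. What is needed is that a function of $H^2(\p\B)$, unimodular a.e.\ on $\p\B$ and with $*$-inverse again in $H^2(\p\B)$, is a unimodular constant; the paper obtains the analogous conclusion by noting that $\widetilde{\varphi}_1^c*\varphi_2$ belongs both to $H^2(\p\B)$ and to the space of functions with only nonpositive powers, hence is constant, and then applies Lemma \ref{modulo} (alternatively one can invoke \cite[Proposition 5.14]{deFGS} as in Theorem \ref{prop-outer}). Moreover, deducing $|h_1|=1$ a.e.\ from $|\varphi_1|=|\varphi_2|=1$ via the pointwise formula needs the vanishing $\Sigma$-measure of $Z_{\varphi_2^s}$ and the bijection $T_{\varphi_2^c}$ (Proposition \ref{zerisimInner}), which should be said explicitly. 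The converse direction of your proposal (isometry of $M_\varphi$, strict inclusion via injectivity) is fine and matches the paper.
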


A corollary of the latter result is an analogue of Beurling's theorem. We point out that hypercomplex versions of Beurling's theorem already appeared in \cite{milanesi-krein,libroMilanesi,milanesi-beurling}. Nonetheless we state it here for the sake of completeness. 

Let $H^\infty(\B)$ denotes the space of bounded slice regular functions on the unit ball. Then, each function $f\in H^\infty(\B)$ admits a boundary value function, which we still denote by $f$. Hence, the space $H^\infty(\B)$ can be identified with a closed subspace of $L^\infty_s(\p\B)$ which we denote by $H^\infty(\p\B)$. As in the $H^2$ setting, if $f\in\ H^\infty(\p\B)$, then in the expression $f(q)$ for $|q|<1$ it is implicit that we are considering the slice regular extension of $f$ to the whole unit ball $\B$.

 A function $\varphi\in H^\infty(\p\B)$ is an {\em inner function} for $H^2(\p\B)$ if $|\varphi(q)|\leq 1$ on $\B$ and $|\varphi(q)|=1$ $\Sigma$-almost everywhere on $\p\B$. Then, the following theorem holds.
 
\begin{teo}\label{thm-Beurling}
 Let $\clK$ be a subspace of $H^2(\partial \B)$. Then $\clK$ is a closed invariant subspace if and only if $\clK=\varphi * H^2(\partial \B)$ for some inner function $\varphi \in H^\infty(\p\B)$.
	
	Moreover, such a function is unique up to a unitary constant in the following sense: if $\clK=\varphi_1\ast H^2(\p\B)= \varphi_2\ast H^2(\p\B)$, then $\varphi_2=\varphi_1\ast u$ for some $u\in\HH$ such that $|u|=1$.
\end{teo}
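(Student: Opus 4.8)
I would obtain Theorem~\ref{thm-Beurling} as a corollary of Theorem~\ref{thm-simply-invariant}; the only step that is not completely formal is the identification of the $\ast$-multiplier as an inner function, and I expect that to be the (mild) crux. Let $\clK\subseteq H^2(\p\B)$ be a non-zero closed invariant subspace (the zero subspace is excluded, as in the classical Beurling theorem). First I would check that $\clK$, viewed as a closed subspace of $L^2_s(\p\B)$, is necessarily \emph{simply} invariant: if $M_q\clK=\clK$, then $\clK=M_q^n\clK\subseteq M_q^nH^2(\p\B)=q^nH^2(\p\B)$ for every $n\in\N$, and since $q^nH^2(\p\B)$ consists of those elements of $H^2(\p\B)$ whose Fourier coefficients of index $<n$ vanish, we would get $\clK\subseteq\bigcap_{n\in\N}q^nH^2(\p\B)=\{0\}$, a contradiction. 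Hence Theorem~\ref{thm-simply-invariant} applies and produces $\varphi\in L^\infty_s(\p\B)$ with $|\varphi|=1$ $\Sigma$-a.e.\ on $\p\B$ and $\clK=\varphi\ast H^2(\p\B)$, the uniqueness-up-to-unitary-constant clause being inherited verbatim from that of Theorem~\ref{thm-simply-invariant}.

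\textbf{Upgrading $\varphi$ to an inner function.} Since the constant $1$ belongs to $H^2(\p\B)$ and is the unit for the $\ast$-product, $\varphi=\varphi\ast 1\in\clK\subseteq H^2(\p\B)$, so $\varphi\in H^2(\p\B)\cap L^\infty_s(\p\B)$ and in particular $\varphi$ extends to a slice regular function on $\B$. To prove $|\varphi(q)|\le 1$ on $\B$ I would pass to slices: by Fubini for $d\Sigma=d\sigma(I)\,dt$, for $\sigma$-a.e.\ $I\in\s$ the restriction $\varphi|_{\B\cap L_I}$ lies in the classical Hardy space of the disc $\B\cap L_I$ and has unimodular boundary values a.e.\ on $\p\B\cap L_I$; applying to its two complex components (with respect to a splitting $\HH=L_I\oplus L_I J$, $J\in\s$ anticommuting with $I$) the classical estimate ``$|g(z)|^2\le$ Poisson integral of $|g|^2$'', we get $|\varphi(z)|\le 1$ for all $z\in\B\cap L_I$ and $\sigma$-a.e.\ $I$. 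The set of such $I$ being dense in $\s$, the union of the corresponding slices is dense in $\B$, and since $\varphi$ is continuous on $\B$ this forces $|\varphi|\le 1$ on all of $\B$. Together with $|\varphi|=1$ $\Sigma$-a.e.\ on $\p\B$, this says exactly that $\varphi$ is inner.

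\textbf{Converse.} If $\varphi\in H^\infty(\p\B)$ is inner, then $\varphi\in L^\infty_s(\p\B)$ with $|\varphi|=1$ $\Sigma$-a.e., so the converse direction of Theorem~\ref{thm-simply-invariant} already gives that $\varphi\ast H^2(\p\B)$ is a closed invariant subspace; moreover it is contained in $H^2(\p\B)$ because $\varphi$, having only non-negative Fourier coefficients, $\ast$-multiplies $H^2(\p\B)$ into itself. The main obstacle I anticipate lies entirely in the ``upgrading'' step --- being careful with the ``$\sigma$-almost every slice'' bookkeeping and with the passage from a dense union of slices to all of $\B$ via continuity of slice regular functions; everything else reduces to Theorem~\ref{thm-simply-invariant} and to elementary properties of the $\ast$-product.
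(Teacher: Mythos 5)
Your argument is correct and follows the same route as the paper, which simply states that Theorem~\ref{thm-Beurling} "can be easily deduced from Theorem~\ref{thm-simply-invariant}": you rule out the doubly invariant case via $\bigcap_{n}q^n\ast H^2(\p\B)=\{0\}$, invoke Theorem~\ref{thm-simply-invariant}, and then upgrade the unimodular multiplier to an inner function. Your "upgrading" step (noting $\varphi=\varphi\ast 1\in\clK\subseteq H^2(\p\B)$ and then getting $|\varphi|\le 1$ on $\B$ slicewise via the splitting into two complex $H^2$ components and the Poisson estimate, plus density of the good slices and continuity) is exactly the detail the paper leaves implicit, and it is sound.
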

Finally, we deduce from Beurling's theorem the inner-outer factorization of functions in $H^2(\p\B)$. A function $g\in H^2(\p\B)$ is an {\em outer function} for $H^2(\p\BB)$ if for any $f\in H^2(\p\B)$ such that $|g(q)|=|f(q)|$ for $\Sigma$-almost every $q\in\p\B$, it holds $|g(q)|\geq |f(q)|$ for any $q\in\B$ \cite[Definition 5.1]{deFGS}.
Then, our factorization result is the following.

\begin{teo}\label{thm-factorization}
Let $f\in H^2(\p\B)$, $f\not\equiv 0$. Then $f$ has a factorization $f=\varphi*g$ where $\varphi$ is inner and $g$ is outer.

Moreover, this factorization is unique up to a unitary constant in the following sense: if $f=\varphi\ast g=\varphi_1\ast g_1$, then $\varphi_1=\varphi\ast\lambda$ and $g_1=\overline{\lambda}\ast g$ for some $\lambda\in\HH$ such that $|\lambda|=1$.
\end{teo}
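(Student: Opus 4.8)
The plan is to read the factorization off the quaternionic Beurling theorem (Theorem~\ref{thm-Beurling}) by applying it to the smallest closed $M_q$-invariant subspace generated by $f$; the actual content will be the equivalence between ``outer'' and ``cyclic''.

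\emph{Existence.} I would put $\clK_f:=\overline{\spa_{\HH}\{q^{n}f:n\in\N\}}\subseteq H^2(\p\B)$, the smallest closed invariant subspace containing $f$; it is non-zero since $f\not\equiv0$, so Theorem~\ref{thm-Beurling} gives an inner $\varphi\in H^\infty(\p\B)$ with $\clK_f=\varphi\ast H^2(\p\B)$, and, as $f\in\clK_f$, we may write $f=\varphi\ast g$ with $g\in H^2(\p\B)$. I claim $g$ is \emph{cyclic}, i.e.\ $\overline{\spa_{\HH}\{q^{n}g:n\in\N\}}=H^2(\p\B)$: from $q\ast\varphi=\varphi\ast q$ one gets $q^{n}f=\varphi\ast(q^{n}g)$, so $\spa_{\HH}\{q^{n}f\}=\varphi\ast\spa_{\HH}\{q^{n}g\}$; since $M_\varphi$ is an isometry of $H^2(\p\B)$ onto the closed subspace $\varphi\ast H^2(\p\B)$ (because $|\varphi|=1$ $\Sigma$-a.e.\ on $\p\B$, as in the proof of Theorem~\ref{thm-simply-invariant}), taking closures yields $\clK_f=\varphi\ast\overline{\spa_{\HH}\{q^{n}g\}}$, and cancelling the injective $M_\varphi$ gives cyclicity of $g$. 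That a cyclic $g\in H^2(\p\B)$ is outer is one direction of the characterisation of outer functions by cyclicity; granting it, $g$ is outer, and the factorisation exists.

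\emph{Cyclic $\Longleftrightarrow$ outer (the main obstacle).} The easy half is that a cyclic $g$ has no non-constant inner $\ast$-factor: if $g=\psi\ast g'$ with $\psi$ inner, then $\overline{\spa_{\HH}\{q^{n}g\}}\subseteq\psi\ast H^2(\p\B)$, whence $\psi\ast H^2(\p\B)=H^2(\p\B)$ and $\psi$ is a unitary constant by Theorem~\ref{thm-Beurling}. The substantial half is that ``no non-constant inner $\ast$-factor'' implies ``outer'', i.e.\ $|g(q)|\ge|h(q)|$ on $\B$ for every $h\in H^2(\p\B)$ with $|h|=|g|$ $\Sigma$-a.e.\ on $\p\B$. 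Here the classical argument (put the quotient in the Smirnov class and apply the maximum principle) does not transfer verbatim, because the $\ast$-product distorts point evaluations: away from the zero set $(u\ast v)(q)=u(q)\,v\big(T_{u^{c}}(q)\big)$ with $T_{u^{c}}(q)=u(q)^{-1}qu(q)$, a point-dependent rotation inside each sphere $e^{t\s}$. I would circumvent this by working on a fixed slice $L_I$: using the splitting $g_I=G_1+G_2J$ ($J\perp I$, $G_1,G_2$ in the classical Hardy space of $\B\cap L_I\cong\D$, $|g|^{2}=|G_1|^{2}+|G_2|^{2}$ there) together with the symmetrisation $N(g)=g\ast g^{c}$ — whose slice restriction is holomorphic and whose zeros over $\B$ record those of $g$ — to translate the absence of a non-constant inner $\ast$-factor of $g$ into a classical non-divisibility statement for $G_1,G_2$, and then to invoke the subharmonicity of $\log(|G_1|^{2}+|G_2|^{2})$, the classical inner--outer factorisation, and the Smirnov maximum principle to get $|h|\le|g|$ on $\B\cap L_I$, hence on $\B$. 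Getting this dictionary right — matching $\ast$-inner factors of a slice function with classical data on $\B\cap L_I$, and keeping track of the twist $T$ — is the step I expect to be hardest. The reverse implication, outer $\Longrightarrow$ cyclic, rests on the same ideas (a non-trivial inner $\ast$-factor would strictly decrease $|g|$ at an interior point, contradicting the extremality in \cite[Definition~5.1]{deFGS}).

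\emph{Uniqueness.} If $f=\varphi\ast g=\varphi_1\ast g_1$ with $\varphi,\varphi_1$ inner and $g,g_1$ outer, then by the equivalence above $g$ and $g_1$ are both cyclic, so $\clK_f=\varphi\ast H^2(\p\B)=\varphi_1\ast H^2(\p\B)$; the uniqueness part of Theorem~\ref{thm-Beurling} gives $\varphi_1=\varphi\ast\lambda$ with $\lambda\in\HH$, $|\lambda|=1$, and substituting into $\varphi\ast g=\varphi_1\ast g_1$ and cancelling the injective $M_\varphi$ yields $g=\lambda\ast g_1$, that is, $g_1=\overline\lambda\ast g$.
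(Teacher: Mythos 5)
Your skeleton is the same as the paper's: apply Beurling's theorem (Theorem \ref{thm-Beurling}) to the smallest closed invariant subspace $E_f$ to get $f=\varphi\ast g$ with $g$ cyclic, and get uniqueness from the uniqueness part of Beurling's theorem; this is exactly the paper's Theorem \ref{thm-fact-cyclic}, and your argument for it (cancelling the isometry $M_\varphi$) is correct. The genuine gap is the step you yourself flag as the main obstacle: the equivalence ``cyclic $\Leftrightarrow$ outer'', which is the real content of the theorem (the paper isolates it as Theorem \ref{prop-outer}) and on which both your existence and your uniqueness arguments rely. In your proposal this equivalence is only a programme --- split $g_I=G_1+G_2J$ on a slice, pass to the symmetrization, use subharmonicity of $\log(|G_1|^2+|G_2|^2)$ and the Smirnov maximum principle --- and the decisive ``dictionary'' matching $\ast$-inner factors of $g$ with classical divisibility data on $\B\cap L_I$ while controlling the twist $T$ is explicitly left open. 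As written, the theorem is therefore not proved.

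Moreover, the route you sketch has a concrete defect precisely at the twist. If $g=\psi\ast g'$ with $\psi$ inner and non-constant, then $\Sigma$-almost everywhere $|g(q)|=|\psi(q)|\,|g'(T_{\psi^c}(q))|=|g'(T_{\psi^c}(q))|$: the boundary moduli of $g$ and $g'$ agree only after composing with the sphere-preserving bijection $T_{\psi^c}$, not pointwise, so $g'$ is not automatically an admissible competitor $h$ with $|h|=|g|$ $\Sigma$-a.e.\ in the definition of outer, and the same evaluation twist appears at interior points; hence your parenthetical argument for ``outer $\Rightarrow$ cyclic'' (a non-trivial inner factor would strictly decrease $|g|$ at an interior point) does not run as stated, and the reduction of ``cyclic $\Rightarrow$ outer'' to ``no non-constant inner $\ast$-factor $\Rightarrow$ outer'' still leaves the hard implication untouched. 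The paper avoids this entirely: it proves cyclic $\Rightarrow$ outer directly, approximating $f$ by $g\ast p_{M_j}$ in norm (hence pointwise in $\B$ by the reproducing kernel and $\Sigma$-a.e.\ on $\p\B$ along a subsequence), using Proposition \ref{zerisimH2} to handle $T_{g^c}$ off a $\Sigma$-null set, and concluding with the maximum modulus principle; and it proves outer $\Rightarrow$ cyclic through the intermediate condition that $f\ast g^{-\ast}\in L^2_s(\p\B)$ forces $f\ast g^{-\ast}\in H^2(\p\B)$, the non-vanishing of outer functions in $\B$ (via the Blaschke factorization of \cite{deFGS}), and \cite[Proposition 5.14]{deFGS}. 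To complete your proof you would need either to carry out your slice dictionary in full, with the twist accounted for, or to supply arguments of the paper's kind for both implications.
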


In order to prove the factorization theorem we also provide a characterization of outer functions in terms of cyclicity. A function $g$ is {\em cyclic}  for $H^2(\p\BB)$ if and only if  
\begin{equation}\label{outer}
 E_g:=\overline{\Span\left\{q^n\ast g, n\geq 0\right\}}=H^2(\p\BB).
\end{equation}
Thus, we prove that a  function $g$ is outer for $H^2(\p\B)$ if and only if it is cyclic; see Theorem \ref{prop-outer}. 

We point out that we are working with quaternionic right Hilbert spaces, thus the left-hand side of \eqref{outer} denotes the closure in $H^2(\p\BB)$ of elements of the form 
\[\sum_{n=0}^M (q^n\ast g)\alpha_n=\sum_{n=0}^M (g\ast q^n)\alpha_n=g*p_M\] 
where $p_M$ is a quaternionic polynomial.

The paper is organized as follows. In Section \ref{prelim} we recall some basic facts about slice functions, we prove some properties we will need in the remaining of the paper and we prove Theorem \ref{thm-idem}. In Section \ref{sec-multiplier-operators} we study on $L^2_s(\p\B)$ the multiplier operator $M_\varphi$ associated to a generic slice function $\varphi$, whereas in Section \ref{sec-invariant-subspaces} we prove the characterization of invariant subspaces and the factorization result.

\section{Preliminary results and slice idempotent functions}\label{prelim}
In this section we recall some notation and basic properties of slice functions, we prove some auxiliary results we will need later and we conclude the section with the proof of Theorem \ref{thm-idem}.

We already defined in Formula \eqref{regular-conjugate} the conjugate of a slice function $f$. Let us now recall the definition of symmetrization. If $f(q)=\sum_{n\in \Z}q^n a_n$ is a slice function on $\p\B$, the {\em symmetrization} of $f$ is defined as
\[  f^s(q):=f^c* f (q)=f* f^c (q).\] 
As in the case of slice regular functions, it holds  $(f* g)^c=g^c* f^c$; see \cite{sarfatti-indiana}. The reciprocal $f^{-*}$ of $f(q)=\sum_{n\in \Z}q^na_n$ with respect to the $*$-product is then given by 

\[f^{-*}(q)=(f^s(q))^{-1}f^c(q).\]

The function $f^{-*}$ is defined on $\p\B \setminus \{q\in \p\B \ | \ f^s(q)=0\}$ and $f* f^{-*}=f^{-*}* f =1$. We recall that $f^{c}, f^s$ and $f^{-\ast}$ are slice (slice regular) functions whenever $f$ is a slice (slice regular) function. 

Notice that the $L^2$-norm of a function $f\in L^2_s(\p\B)$ depends only on the moduli of the coefficients of its power series expansion. Hence $f\in L^2_s(\p\B)$ if and only if its conjugate $f^c$ does, and $\|f\|^2_{L^2_s(\p\B)}=\|f^c\|^2_{L^2_s(\p\B)}$. 

The analogous property holds for $L^\infty_s(\p\B)$, that is, a function $\varphi$ belongs to $L^{\infty}_s(\p\B)$ if and only if its conjugate $\varphi^c$ does and the two norms coincide, $\|\varphi\|_{L^{\infty}_s(\p\B)}=\|\varphi^c\|_{L^{\infty}_s(\p\B)}$; see \cite{sarfatti-indiana}. Moreover we have the following proposition.
\begin{pro}\label{modulophic}
Let $\varphi\in L^\infty_s(\p\B)$. Then, $|\varphi|=1$ $\Sigma$-almost everywhere on $\p\B$ if and only if $|\varphi^c|=1$ $\Sigma$-almost everywhere on $\p\B$.
\end{pro}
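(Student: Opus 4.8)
The plan is to deduce the proposition from the two isometry properties of conjugation recalled immediately above, both proved in \cite{sarfatti-indiana}: that $g\mapsto g^{c}$ preserves membership in, and the norm of, $L^{2}_{s}(\p\B)$, and that it preserves membership in, and the norm of, $L^{\infty}_{s}(\p\B)$. Since $(\varphi^{c})^{c}=\varphi$, it suffices to prove one implication, so I would assume $|\varphi|=1$ $\Sigma$-almost everywhere and show that $|\varphi^{c}|=1$ $\Sigma$-almost everywhere.

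First I would use $\|\varphi^{c}\|_{L^{\infty}_{s}(\p\B)}=\|\varphi\|_{L^{\infty}_{s}(\p\B)}=1$ to obtain the one-sided bound $|\varphi^{c}|\le 1$ $\Sigma$-a.e. Then, recalling that $\|g\|_{L^{2}_{s}(\p\B)}^{2}=\int_{\p\B}|g|^{2}\,d\Sigma$ for every $g\in L^{2}_{s}(\p\B)$, and that $\varphi,\varphi^{c}\in L^{2}_{s}(\p\B)$ since they are bounded and $\Sigma$ is finite, I would compute
\[
\int_{\p\B}|\varphi^{c}|^{2}\,d\Sigma=\|\varphi^{c}\|_{L^{2}_{s}(\p\B)}^{2}=\|\varphi\|_{L^{2}_{s}(\p\B)}^{2}=\int_{\p\B}|\varphi|^{2}\,d\Sigma=\Sigma(\p\B),
\]
the last equality because $|\varphi|=1$ $\Sigma$-a.e. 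Hence $\int_{\p\B}\bigl(1-|\varphi^{c}|^{2}\bigr)\,d\Sigma=0$ with a $\Sigma$-a.e.\ nonnegative integrand, which forces $1-|\varphi^{c}|^{2}=0$ $\Sigma$-a.e., i.e.\ $|\varphi^{c}|=1$ $\Sigma$-a.e. The reverse implication then follows by applying this to $\varphi^{c}$ and using $(\varphi^{c})^{c}=\varphi$.

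I do not expect a genuine obstacle: the argument is a ``sandwich'' --- conjugation leaves the $L^{2}$ mass unchanged and cannot increase the sup norm, so it must send a unimodular function to a unimodular function. The only care needed is bookkeeping: that the two cited norm identities are invoked correctly, that the $L^{2}_{s}$ norm is indeed the one given by integration against $\Sigma$, and that the normalization is $\Sigma(\p\B)=1$ (so that $\Sigma(\p\B)=\|1\|_{L^{2}_{s}(\p\B)}^{2}$).

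If instead a proof displaying the slice structure were preferred, one can argue sphere by sphere. By the Representation Formula \eqref{repfor2}, on each sphere $e^{t\s}$ one has $\varphi(e^{tI})=B+IC$ and $\varphi^{c}(e^{tI})=\overline{B}+I\overline{C}$ for suitable $B=B(t)$, $C=C(t)\in\HH$ independent of $I$; a short computation gives $|\varphi(e^{tI})|^{2}=|B|^{2}+|C|^{2}+2\RRe(\overline{B}\,I\,C)$, which is an affine function of $I\in\s$. Requiring it to equal $1$ for a.e.\ $I$ (hence, by continuity, for all $I\in\s$) forces $|B|^{2}+|C|^{2}=1$ together with $C\overline{B}\in\RR$; since $C\overline{B}\in\RR$ if and only if $\overline{C}B\in\RR$, the very same two conditions characterize $|\varphi^{c}|\equiv 1$ on $e^{t\s}$, and one concludes as before.
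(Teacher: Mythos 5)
Your main argument is correct, but it is a genuinely different route from the one the paper takes: the paper gives no details and simply defers to the analogous statement for slice regular functions (\cite[Proposition 5]{BlochDGS}, \cite[Theorem 3.7]{gps}), whose proof is the sphere-by-sphere analysis via the representation formula --- essentially your second, alternative sketch, in which $\varphi=B+IC$ and $\varphi^c=\overline{B}+I\overline{C}$ on each sphere and unimodularity is characterized by $|B|^2+|C|^2=1$ together with $C\overline{B}\in\RR$. Your primary ``sandwich'' argument instead combines the two conjugation-invariance facts the paper records just before the proposition ($\|\varphi^c\|_{L^\infty_s(\p\B)}=\|\varphi\|_{L^\infty_s(\p\B)}$ and $\|\varphi^c\|_{L^2_s(\p\B)}=\|\varphi\|_{L^2_s(\p\B)}$) with the finiteness and normalization of $\Sigma$: the $L^\infty$ identity gives $|\varphi^c|\le 1$ a.e., the $L^2$ identity pins the total mass, and nonnegativity of $1-|\varphi^c|^2$ forces equality a.e.; the only point you must (and do) verify is that the coefficient norm on $L^2_s(\p\B)$ coincides with $\int_{\p\B}|\cdot|^2\,d\Sigma$, which holds with the paper's normalization $\Sigma(\p\B)=1$ because $\{q^n\}_{n\in\Z}$ is orthonormal also for the $\Sigma$-integral pairing. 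The trade-off: your global argument is shorter, self-contained within the stated norm identities, and avoids any pointwise analysis, but it only yields the a.e. statement and needs both ingredients (the $L^\infty$ bound alone or the $L^2$ mass alone would not suffice); the sphere-by-sphere route of the cited proofs is more local and yields the stronger information that $|\varphi|\equiv 1$ and $|\varphi^c|\equiv 1$ hold simultaneously on each individual sphere, at the cost of the small bookkeeping you gloss over in the sketch (passing from a.e.\ $I$ to all $I$ by affinity/continuity, and Fubini in $t$).
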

\begin{proof}
 The proof follows the same lines of the proof \cite[Proposition 5]{BlochDGS} for slice regular functions, thus we do not include the details here. See also \cite[Theorem 3.7]{gps}.
 \end{proof}

In order to prove Theorem \ref{thm-simply-invariant} we need a result on the $\Sigma$-measure of the zero set of the symmetrization of a a bounded slice function. We refer the reader to Formula \eqref{Sigma} for the definition of the measure $\Sigma$.

Let $Z_{\varphi}=\{q\in\p\B: \varphi(q)=0\}$ be the zero set of the function $\varphi$. Then, as for slice regular functions (see, e.g., \cite[Proposition 3.9]{libroGSS}),
\begin{equation}\label{zerosetsim}
Z_{\varphi^s}=\bigcup_{e^{tI}\in Z_\varphi} e^{t\SS}.
\end{equation}

We recall also that if $\varphi:\B\to\HH$ is a slice function, then the map
\begin{equation}\label{op-T-phi}
T_{\varphi^c}(q)= \varphi(q)^{-1}q\varphi(q)
\end{equation}
is a bijection from $\B\backslash Z_{\varphi^s}$ to itself; its inverse, as in the case of slice regular functions, is the map $T_{\varphi}$ (see \cite[Proposition 5.32]{libroGSS}). Moreover, if $\varphi:\p\B\to\HH$ and $|\varphi|=1$ $\Sigma$-almost everywhere on $\p\B$, then $T_{\varphi^c}$ is a bijection from $\p\B\backslash Z_{\varphi^s}$ to itself.

The following result holds.
\begin{pro}\label{zerisimInner}
 	Let $\varphi \in L^{\infty}_s(\B)$ be such that $|\varphi|=1$ $\Sigma$-almost everywhere on $\p\B$. Then, $Z_{\varphi^s}$ has vanishing $\Sigma$-measure. Moreover it holds that $|\varphi^s|=1$ $\Sigma$-almost everywhere on $\p\B$ as well. 
\end{pro}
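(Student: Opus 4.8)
The plan is to reduce everything to a statement about the zero set of $\varphi^s$ on a single slice $L_I$, using the slice structure and the Representation Formula, and then to invoke the classical fact that a bounded holomorphic function on the disc whose boundary values have modulus $1$ a.e.\ cannot vanish on a set of positive measure—except here $\varphi$ is only $L^\infty$, not holomorphic, so I must argue directly. First I would fix an imaginary unit $I\in\SS$ and restrict to the slice: write $\varphi_I(e^{I\theta})=\sum_{n\in\Z} e^{I n\theta} a_n$ as an $L^\infty$ function on the circle $\p\B\cap L_I$, valued in $L_I\oplus L_I^\perp\cong\C^2$ after splitting by the two eigenspaces of left multiplication by $I$. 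On this slice the $*$-product becomes the ordinary (pointwise) product of the complex-valued components, so $\varphi^s=\varphi^c*\varphi$ restricted to $L_I$ is, componentwise, $\overline{g}\,g'$-type expressions built from $\varphi_I$; in particular $|\varphi^s_I(e^{I\theta})|$ is controlled by $|\varphi_I(e^{I\theta})|^2$ and, conversely, $\varphi^s_I(e^{I\theta})=0$ forces $\varphi_I(e^{I\theta})$ to be non-invertible in $\HH$, i.e.\ $\varphi(e^{I\theta})=0$ on that slice.

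Next I would use the hypothesis $|\varphi|=1$ $\Sigma$-a.e.\ together with \eqref{Sigma}: by Fubini, for $\sigma$-almost every $I\in\SS$ one has $|\varphi(e^{I\theta})|=1$ for Lebesgue-a.e.\ $\theta$. Fix such an $I$. Then $\varphi_I$ is a bounded measurable function on the circle, of modulus $1$ a.e., hence invertible a.e.\ on that slice; consequently $Z_\varphi\cap L_I$ has measure zero on the circle, and $\varphi^s_I$ is nonzero a.e.\ on $L_I$. Now the structural identity \eqref{zerosetsim}, $Z_{\varphi^s}=\bigcup_{e^{tI}\in Z_\varphi}e^{t\SS}$, shows that a sphere $e^{t\SS}$ meets $Z_{\varphi^s}$ only if the real parameter $t$ (equivalently $e^{I\theta}$ with $\cos\theta=\tanh$-type coordinate, or simply the radius-one slice variable) already satisfies $e^{tI}\in Z_\varphi$; since $Z_\varphi\cap L_I$ has measure zero, the set of such $t$ has measure zero, so $\Sigma(Z_{\varphi^s})=\int_\SS (\text{measure of bad } t)\,d\sigma=0$ by \eqref{Sigma} again. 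This gives the first assertion.

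For the second assertion, $|\varphi^s|=1$ $\Sigma$-a.e., I would first note that off $Z_{\varphi^s}$ the identity $\varphi*\varphi^{-*}=1$ with $\varphi^{-*}=(\varphi^s)^{-1}\varphi^c$ is available, and then compute $|\varphi^s|$ pointwise on a slice using the relation between $*$-product and pointwise product: on $L_I\setminus Z_{\varphi^s}$ one has $\varphi^c*\varphi(q)=\varphi^c(q)\,\varphi(T_{(\varphi^c)^c}(q))=\varphi^c(q)\varphi(T_\varphi(q))$, and $T_\varphi$ is (by the cited bijection property and its interaction with $\Sigma$) $\Sigma$-measure preserving on $\p\B\setminus Z_{\varphi^s}$; combined with $|\varphi^c|=|\varphi|$ $\Sigma$-a.e.\ (Proposition~\ref{modulophic}) and $|\varphi|=1$ $\Sigma$-a.e., one gets $|\varphi^s(q)|=|\varphi^c(q)|\,|\varphi(T_\varphi(q))|=1$ for $\Sigma$-a.e.\ $q$. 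The main obstacle I anticipate is the careful bookkeeping in this last step: verifying that $T_\varphi$ preserves $\Sigma$-null sets and that the composition $\varphi\circ T_\varphi$ is still defined and of modulus $1$ $\Sigma$-a.e.\ (the null set where $|\varphi|\neq1$ must be shown to pull back to a null set under $T_\varphi$), since $\varphi$ is merely $L^\infty$ and $T_\varphi$ is a nonlinear rearrangement within each sphere. If that measure-theoretic compatibility is delicate, the fallback is to prove $|\varphi^s|\le 1$ $\Sigma$-a.e.\ directly from $\|\varphi^c*\varphi\|_{L^\infty}\le\|\varphi^c\|_{L^\infty}\|\varphi\|_{L^\infty}=1$ and the reverse inequality from $\varphi^s*(\varphi^s)^{-1}=1$ a.e.\ together with $|(\varphi^s)^{-1}|\le 1$, which would also follow once we know $(\varphi^s)^{-*}$ lies in $L^\infty_s(\p\B)$.
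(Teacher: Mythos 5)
The decisive gap is in the first assertion. Your reduction rests on the claim that $\varphi^s(e^{I\theta})=0$ forces $\varphi(e^{I\theta})=0$ \emph{at the same point of the chosen slice}, so that ``$Z_\varphi\cap L_I$ null'' would yield ``$Z_{\varphi^s}\cap L_I$ null''. This is false: by \eqref{zerosetsim}, $Z_{\varphi^s}$ is the \emph{circularization} of $Z_\varphi$, i.e.\ $\varphi^s$ vanishes on the whole sphere $e^{t\s}$ as soon as $\varphi$ vanishes at one point of that sphere, and that point need not lie on the fixed slice $L_I$ (when $\varphi(q)\neq0$ one has $\varphi^s(q)=\varphi(q)\varphi^c(T_{\varphi^c}(q))$, so a zero of $\varphi^s$ at $q$ only produces a zero of $\varphi$ or $\varphi^c$ \emph{somewhere} on the sphere through $q$). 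The paper's own example $\ell(e^{tI})=\frac{1-IJ}{2}$ makes the failure concrete: $\ell$ has exactly one zero on each sphere, so it is nonvanishing on every slice $L_I$ with $I\neq\pm J$, yet $\ell^s\equiv 0$. The scenario your argument never excludes is precisely a $\Sigma$-null $Z_\varphi$ consisting of one isolated zero per sphere for $t$ in a set of positive measure, whose circularization $Z_{\varphi^s}$ has positive $\Sigma$-measure; a Fubini argument on a single (or on almost every) slice cannot rule this out, because the hypothesis $|\varphi|=1$ a.e.\ must be exploited \emph{spherewise}. That is exactly what the paper does: if $\varphi$ has an isolated zero at $e^{tI_0}$ and $\varphi(e^{-tI_0})=c\neq 0$, the Representation Formula \eqref{repfor2} forces $\varphi(e^{tJ})=\frac{1+JI_0}{2}\,c$, whose modulus equals $1$ only for a $\sigma$-null set of $J\in\s$; hence the set of such $t$ is $dt$-null, and together with the (trivially null) union of spherical zeros this gives $\Sigma(Z_{\varphi^s})=0$. (A side remark: the preliminary claim that on a slice the $*$-product ``becomes the ordinary pointwise product of the components'' is also inaccurate — the splitting produces a twisted product involving conjugation of the second components — but the fatal step is the zero-set implication above.)

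For the second assertion your main route is the paper's ($|\varphi^s(q)|=|\varphi(q)|\,|\varphi^c(T_{\varphi^c}(q))|$ off the null set $Z_{\varphi^s}$), but the measure-theoretic point you flag is not resolved by what you invoke: $T_{\varphi^c}$ maps each sphere to itself by $I\mapsto\varphi(e^{tI})^{-1}I\varphi(e^{tI})$, which is not a rigid rotation and has no reason to preserve $\Sigma$ or even $\Sigma$-null sets, so ``$T_\varphi$ is measure preserving'' cannot be asserted. The clean repair again uses sliceness: on a sphere $e^{t\s}$ one has $\varphi=\alpha(t)+I\beta(t)$, so $|\varphi(e^{tI})|^2=|\alpha|^2+|\beta|^2+2\RRe(\bar\alpha I\beta)$ is an affine function of $I\in\s\subset\RR^3$, and if it equals $1$ for $\sigma$-a.e.\ $I$ it equals $1$ for \emph{every} $I$ on that sphere; since this happens for $dt$-a.e.\ $t$ and $T_{\varphi^c}$ preserves spheres, the composed factor has modulus $1$ $\Sigma$-a.e. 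Your fallback is circular: the inequality $|(\varphi^s)^{-1}|\le 1$, i.e.\ $|\varphi^s|\ge 1$, is exactly what has to be proved. As it stands, both halves of the proposal have genuine gaps, the first being the essential one.
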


\begin{proof}
From the Representation Formula \eqref{repfor} we deduce that if $\varphi$ vanishes at $e^{tI}$, then either $\varphi$ vanishes identically on the sphere $e^{t\s}$, and we call such a sphere a {\em spherical zero}, or it does not have any other zero in $e^{t\s}$. Let $A$ be the union of spherical zeros of $\varphi$ and $B$ the union of spheres where $\varphi$ vanishes only at a point.
Since $|\varphi(q)|=1$ for $\Sigma$-almost every $q$, we immediately get that $\Sigma(A)=0$.
Consider now a sphere $e^{t\s}$ in $B$ and let $e^{tI_0}\in e^{t\s}$ be the only point where $\varphi$ vanishes. Then we have that $\varphi(e^{-tI_0})=c$ for some quaternion $c\neq 0$ and hence, thanks to the Representation Formula \eqref{repfor2}, that, for any other $J\in \s$,
\[\varphi(e^{tJ})=\frac{1-JI_0}{2}\varphi(e^{tI_0})+\frac{1+JI_0}{2}\varphi(e^{-tI_0})=\frac{1+JI_0}{2}c.\]
In particular $|\varphi(e^{tJ})|$ is different from $1$ for $\sigma$-almost every $J\in \s$. Thus $dt(\{t : e^{t\s}\subseteq B\})=0$ and $\Sigma(B)=0$ as well.

To prove the last part of the statement, we use the fact that  $T_{\varphi^c}$ is a bijection from $\p\B\backslash Z_{\varphi^s}$ to itself as we remarked above. Since $Z_{\varphi^s}$ has $\Sigma$-measure zero, for $\Sigma$-almost every $q\in\p\B$, we get
\[|\varphi^s(q)|=|\varphi(q)||\varphi^c(T_{\varphi^c}(q))|.\]
Now, recalling that $|\varphi|=1$ $\Sigma$-almost everywhere on $\p\B$ if and only if the same holds true for $\varphi^c$, we conclude that $|\varphi^s|=1$ $\Sigma$-almost everywhere on $\p\B$ as we wished to show.
\end{proof}
 Also the symmetrization of a function in $H^2(\p\B)$ cannot vanish on a set with positive $\Sigma$-measure.
 \begin{pro}\label{zerisimH2}
 	Let $f\in H^2(\p\B)$, $f\not\equiv 0$. Then $Z_{f^s}$ has vanishing $\Sigma$-measure.
 \end{pro}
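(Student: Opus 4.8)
The plan is to reduce the claim, slice by slice, to the classical fact that a nonzero function in the Hardy space $H^1$ of the unit disc has nontangential boundary values that vanish only on a set of measure zero. To set up, note that $f^s=f^c*f$ is (the boundary value of) a slice regular function on $\B$, which I still denote $f^s$. On the real diameter $\B\cap\RR=(-1,1)$ one has $f^c(x)=\overline{f(x)}$ and the $*$-product agrees with the pointwise product, so $f^s(x)=f^c(x)f(x)=|f(x)|^2$. Since $f\not\equiv 0$, the identity principle for slice regular functions forbids $f$ from vanishing identically on $(-1,1)$; hence $f^s$ is not identically zero on $(-1,1)$, and a fortiori not identically zero on any slice $\B\cap L_I$.

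Next, fix $J\in\s$ and choose $K\in\s$ with $K\perp J$. By the splitting lemma the restriction of $f$ to the disc $\B\cap L_J$ can be written $f=F+GK$ with $F,G$ holomorphic $L_J$-valued functions; since $f\in H^2(\B)$, both $F$ and $G$ lie in the classical Hardy space $H^2$ of $\B\cap L_J$ (indeed $\|f\|^2_{H^2(\B)}=\|F\|^2_{H^2}+\|G\|^2_{H^2}$). A direct computation with the $*$-product then gives
$$
f^s|_{\B\cap L_J}=FF^{\sharp}+GG^{\sharp},\qquad h^{\sharp}(z):=\overline{h(\bar z)}\,,
$$
and $h^{\sharp}\in H^2$ whenever $h$ does, since it has the same Taylor coefficients up to conjugation. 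Thus $f^s|_{\B\cap L_J}$ is a sum of products of two $H^2$ functions, so by the Cauchy--Schwarz inequality it belongs to the Hardy space $H^1$ of $\B\cap L_J$; by the previous paragraph it is not identically zero.

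By the classical theory of $H^1$ of the disc, the nontangential boundary values of $f^s|_{\B\cap L_J}$ then form an integrable function with integrable logarithm, so $f^s(e^{tJ})\neq 0$ for almost every $t$. On the other hand, by \eqref{zerosetsim} applied to $f$ the set $Z_{f^s}$ is a union of full spheres $e^{t\s}$, whence $e^{t\s}\subseteq Z_{f^s}$ if and only if $f^s(e^{tJ})=0$; therefore $T:=\{t:\ e^{t\s}\subseteq Z_{f^s}\}$ has vanishing $dt$-measure. Using the definition \eqref{Sigma} of $\Sigma$ we then obtain
$$
\Sigma(Z_{f^s})=\int_{\s}\Big(\int_{\{t\,:\,e^{t\s}\subseteq Z_{f^s}\}}\!dt\Big)\,d\sigma(I)=\int_{\s}dt(T)\,d\sigma(I)=0\,.
$$

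The point I expect to demand the most care is the identification underlying the last two paragraphs: one must verify that $f^s$, although obtained by $*$-multiplying two functions of $H^2(\p\B)$ (whose coefficient sequences need not be square-summable), is genuinely the boundary value of the slice regular function $f^c*f\in H^1(\B)$, and that its restriction to each circle $\p\B\cap L_J$ coincides with the classical $H^1$-boundary values of $f^s|_{\B\cap L_J}$. This rests on the basic boundary-value theory of the quaternionic Hardy spaces $H^p(\B)$, $p\ge 1$, together with the slice identity $f^s|_{\B\cap L_J}=FF^{\sharp}+GG^{\sharp}$; everything else is the identity principle, property \eqref{zerosetsim} (proved via the Representation Formula, exactly as in the proof of Proposition \ref{zerisimInner}), and the classical Hardy space theory on the disc.
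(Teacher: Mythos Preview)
Your proof is correct and follows essentially the same architecture as the paper's: reduce to a single slice, show that $f^s$ restricted to that slice is a nonzero $H^1$ function, invoke the classical fact that such functions have boundary values vanishing only on a null set, and then integrate over $\s$ via the definition of $\Sigma$. The paper's version is terser because it outsources the two key facts---that $f^s\in H^1(\partial\B)$ and that its slicewise boundary values vanish only on a $dt$-null set---to \cite[Propositions~3.13 and~4.4]{deFGS}, whereas you essentially reprove them via the splitting lemma (obtaining $f^s|_{\B\cap L_J}=FF^{\sharp}+GG^{\sharp}\in H^1$) and supply the extra step showing $f^s\not\equiv 0$ through the real-axis identity $f^s(x)=|f(x)|^2$ and the identity principle; this makes your argument more self-contained but not genuinely different.
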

 \begin{proof}
  
  Set $Z_{f^s}^{I_0}:=\{e^{t{I_0}} \in\partial \B:  t\in (0,\pi), e^{t\s}\subseteq Z_{f^s}\}$. The fact that $f$ is in $H^2(\partial \B)$ and does not vanish identically implies (see \cite[Propositions 3.13 and 4.4]{deFGS}) that $f^s\in H^1(\partial \B)$ and, on each slice $\partial \B\cap L_{I_0}$, it vanishes on a zero $dt$-measure set, thus $\int_{Z_{f^s}^{I_0}}dt=0$. Notice that $Z_{f^s}$ is symmetric with 
  respect to the real axis and hence it can be decomposed as 
  \begin{align*}
  (Z_{f^s}\cap \rr)\cup (Z_{f^s}^{I_0}\times \s)&=(Z_{f^s}\cap \rr)\cup \{e^{t\s}\subseteq\partial \B : e^{tI_0}\in Z_{f^s}^{I_0}\}\\
  &=(Z_{f^s}\cap \rr)\cup \{e^{t\s}\subseteq\partial \B :  t\in (0,\pi), \ e^{t\s}\subseteq Z_{f^s}\}=Z_{f^s}.
  \end{align*}
  Since $(Z_{f^s}\cap \rr)\subseteq\{-1,1\}$ we conclude that
  \[\Sigma(Z_{f^s})=\int_{Z_{f^s}}d\Sigma(e^{tI})=\int_{\s} \int_{Z_{f^s}^{I_0}}dt d\sigma(I)=\int_{Z_{f^s}^{I_0}}dt\int_{\s}  d\sigma(I)=0.\]
 \end{proof}

From the above proposition we deduce the following result.
\begin{lem}\label{modulo}
	Let $\varphi\in L_s^{\infty}(\partial\B)$ and denote by $\widetilde{\varphi}$ the slice function defined by $\widetilde{\varphi}(q)=\varphi(\bar q)$.
	Then $|\varphi|=1$ $\Sigma$-almost everywhere on $\partial \B$ if and only if  $\widetilde{\varphi}*\varphi^c=\varphi^c*\widetilde{\varphi}=1$ $\Sigma$-almost everywhere on $\partial \B$.

\end{lem}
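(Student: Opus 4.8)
The plan is to reduce the statement to the pointwise relation between $*$-products and pointwise products that was recorded in the introduction, namely $h_1*h_2(q)=h_1(q)\,h_2(T_{h_1^c}(q))$ whenever $h_1(q)\neq 0$. First I would observe that $\widetilde{\varphi}$ is again a slice function in $L^\infty_s(\p\B)$: if $\varphi(q)=\sum_{n\in\Z}q^na_n$ then, using the Representation Formula, $\widetilde\varphi(q)=\varphi(\bar q)$ has power series expansion $\sum_{n\in\Z}q^n\overline{a_{-n}}$ (equivalently $\widetilde\varphi=\widetilde{(\varphi^c)}{}^{\,c}$ with the obvious bookkeeping), so all the $*$-products appearing in the statement make sense and lie in $L^\infty_s(\p\B)$; the measure $\Sigma$ is invariant under $q\mapsto\bar q$, so $|\widetilde\varphi|$ and $|\varphi|$ have the same essential supremum and the same level sets up to reflection.

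Next I would pin down the identity $\widetilde{\varphi}*\varphi^c=\varphi^c*\widetilde{\varphi}$ as a formal consequence of the power-series definition of $*$, so that it suffices to treat one of the two products; this is the sort of direct computation the paper elsewhere defers, and I would do likewise. Then the heart of the argument: fix $I\in\s$ and work on the slice $\p\B\cap L_I$, using the integral representation of the inner product and the fact (from Proposition \ref{zerisimInner}, applicable once we are in the ``$|\varphi|=1$'' direction, and from Proposition \ref{zerisimH2}/the discussion of $Z_{\varphi^s}$ in the other) that $Z_{\varphi^s}$ is $\Sigma$-null, hence $T_{\varphi^c}$ is a $\Sigma$-a.e.\ defined bijection of $\p\B$. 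For the forward implication, assuming $|\varphi|=1$ $\Sigma$-a.e., I would compute pointwise, off $Z_{\varphi^s}$,
\[
\widetilde\varphi*\varphi^c(q)=\widetilde\varphi(q)\,\varphi^c\big(T_{\widetilde\varphi^{\,c}}(q)\big),
\]
and show this equals $1$ by checking that $\widetilde\varphi^{\,c}=\varphi$ holds $\Sigma$-a.e.\ \emph{in modulus-one situations}—more precisely, that under $|\varphi|=1$ one has $\varphi^{-*}=\widetilde{\varphi^c}$, equivalently $\varphi^c$ is the pointwise inverse of $\widetilde\varphi$ after the substitution $T$, which is exactly the content one extracts by comparing $|\varphi(q)|=1$ with the formula $|\varphi^s(q)|=|\varphi(q)|\,|\varphi^c(T_{\varphi^c}(q))|$ used in the proof of Proposition \ref{zerisimInner}. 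For the converse, from $\widetilde\varphi*\varphi^c=1$ $\Sigma$-a.e.\ I would take moduli: outside the null set $Z_{\varphi^s}$, $1=|\widetilde\varphi(q)|\,|\varphi^c(T_{\widetilde\varphi^{\,c}}(q))|$; since $\varphi^c\in L^\infty_s$ (Proposition \ref{modulophic} giving $\|\varphi\|_\infty=\|\varphi^c\|_\infty$) and $T_{\widetilde\varphi^{\,c}}$ is measure-class preserving on each slice, a standard argument—if $|\varphi|<1$ on a set of positive measure then so is $|\varphi^c|$, forcing the product below $1$ somewhere—yields $|\varphi|=1$ $\Sigma$-a.e.

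The main obstacle I anticipate is bookkeeping the interaction of the three operations $f\mapsto f^c$, $f\mapsto \widetilde f$, and $f\mapsto T_{f^c}$, together with verifying that $T_{\widetilde\varphi^{\,c}}$ preserves $\Sigma$-null sets on $\p\B$ (so that ``a.e.'' statements survive the change of variable). The cleanest route is probably to avoid $T$ altogether and argue entirely on the level of the orthonormal basis $\{q^n\}_{n\in\Z}$: the relation $\widetilde\varphi*\varphi^c=1$ is an identity of power series whose coefficients are explicit convolutions of $\{a_n\}$ with $\{\overline{a_{-n}}\}$, and one can match it against the coefficient identity characterizing $|\varphi|=1$ a.e.\ (namely $\varphi*\varphi^{-*}$ having the coefficients of the constant $1$, combined with $\|\varphi\|_2$-type normalization on each slice). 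I would keep the $T_{\varphi^c}$-substitution as the conceptual backbone but fall back on the algebraic identity if the measure-theoretic bijection bookkeeping becomes delicate.
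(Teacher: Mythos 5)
There is a genuine gap: your sketch is missing the identity on which both directions of the paper's proof rest, namely the real-part formula obtained by expanding in the basis $\{q^n\}_{n\in\Z}$. Writing $\varphi(e^{tI})=\sum_{n\in\Z}e^{ntI}a_n$, computing the coefficients of $\widetilde{\varphi}*\varphi^c$, and using $\RRe(ab)=\RRe(ba)$, one gets $\RRe\big(\widetilde{\varphi}*\varphi^c(e^{tI})\big)=|\varphi(e^{-tI})|^2=|\widetilde{\varphi}(e^{tI})|^2$ $\Sigma$-almost everywhere (and the analogue for $\varphi^c*\widetilde{\varphi}$). In your forward direction, the pointwise formula $\widetilde{\varphi}*\varphi^c(q)=\widetilde{\varphi}(q)\,\varphi^c(T_{\widetilde{\varphi}^{\,c}}(q))$ together with Proposition \ref{zerisimInner} gives only $|\widetilde{\varphi}*\varphi^c|=1$ a.e.\ (this is exactly the step the paper also performs), and a unimodular quaternion need not equal $1$. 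Your two ways of closing this gap do not work: the claim ``$\widetilde{\varphi}^{\,c}=\varphi$ in modulus-one situations'' is false (take $\varphi(q)=q$, for which $\widetilde{\varphi}^{\,c}(q)=q^{-1}$), while the ``more precise'' claim $\varphi^{-*}=\widetilde{\varphi^c}$ is, after $*$-multiplying by $\varphi$ and conjugating, literally equivalent to $\widetilde{\varphi}*\varphi^c=1$, i.e.\ to the statement being proved, and it cannot be extracted from $|\varphi^s(q)|=|\varphi(q)||\varphi^c(T_{\varphi^c}(q))|$, which again controls only moduli. The paper concludes because modulus $1$ combined with real part $1$ (from the identity above) forces the value $1$.

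The converse suffers from the same defect: from $\widetilde{\varphi}*\varphi^c=1$, taking moduli gives $1=|\widetilde{\varphi}(q)||\varphi^c(T_{\widetilde{\varphi}^{\,c}}(q))|$, and nothing in your sketch prevents one factor from exceeding $1$ while the other compensates on a set of positive measure; the step ``if $|\varphi|<1$ on a set of positive measure then so is $|\varphi^c|$, forcing the product below $1$'' is unjustified (and the two factors are evaluated at different points of the sphere, so even granted it would not conclude), while Proposition \ref{zerisimH2} is about $H^2$ functions and does not apply to a general $\varphi\in L^\infty_s(\p\B)$. With the real-part identity the converse is one line: $|\varphi(e^{-tI})|^2=\RRe(1)=1$. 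Two further algebraic slips: $\widetilde{\varphi}*\varphi^c=\varphi^c*\widetilde{\varphi}$ is \emph{not} a formal identity --- the $n$-th coefficients are $\sum_j a_{j-n}\overline{a_j}$ and $\sum_j \overline{a_j}a_{j-n}$, which differ in $\HH$ (only their real parts agree), so the two products must be treated separately, as the paper does --- and the expansion of $\widetilde{\varphi}$ is $\sum_n q^n a_{-n}$, not $\sum_n q^n\overline{a_{-n}}$ (the latter is $\widetilde{\varphi}^{\,c}$). Your fallback of arguing purely on coefficients is, in spirit, exactly the paper's method, but matching against ``$\varphi*\varphi^{-*}=1$'' characterizes nothing, since that identity holds for every $\varphi$; the coefficient identity you actually need is the real-part formula above.
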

\begin{proof}
	Consider the power series expansion of $\varphi$, $\varphi(e^{tI})=\sum_{n\in\Z}e^{ntI}a_n$ holding $\Sigma$-almost everywhere on $\p\B$.
Then,
\begin{align*}
|\varphi(e^{tI})|^2&=\overline{\sum_{n\in\Z}e^{ntI}a_n}\sum_{m\in\Z}e^{mtI}a_m=\sum_{n\in\Z}\overline{a_n}e^{-ntI}\sum_{m\in\Z}e^{mtI}a_m=
\sum_{n,m\in\Z}\overline{a_n}e^{(m-n)tI}a_m\,.
\end{align*}
On the other hand,
\begin{align*}
\widetilde{\varphi}*\varphi^c(e^{tI})&=\sum_{n\in\Z}e^{ntI}\sum_{m\in\Z}a_{-m}\overline{a_{n-m}}
=\sum_{n,m\in\Z}e^{(n-m)tI}a_{m}\overline{a_{n}}\,,
\end{align*}
where the last equality is just a relabeling of the indexes.
Recalling the equality $\RRe(ab)=\RRe(ba)$ for any $a,b\in\HH$, we get that $\Sigma$-almost everywhere it holds
\begin{align}\label{modulophi}
|\varphi(e^{-tI})|^2&
=\RRe(\widetilde{\varphi}*\varphi^c(e^{tI})).
\end{align}
Analogously, it is possible to prove that\[|\varphi^c(e^{tI})|^2=\RRe(\varphi^c*\widetilde{\varphi}(e^{tI})). \]
Now, if $\widetilde{\varphi}*\varphi^c(q)=\varphi^c*\widetilde{\varphi}(q)=1$ for $\Sigma$-almost every $q\in\p\B$, then $|\varphi(q)|=|\varphi^c(q)|=1$ for $\Sigma$-almost every $q$ as well.

In the other direction, if $|\varphi(q)|=1$ for $\Sigma$-almost every $q\in\p\B$, then also $|\widetilde{\varphi}(q)|=1$ and $|\varphi^c(q)|=1$ for $\Sigma$-almost every $q$ (since $q\mapsto \bar q$ is a diffeomorphism of $\p\B$ to itself and thanks to Proposition \ref{modulophic}). In particular, thanks to Proposition \ref{zerisimInner}, the zero set $Z_{\widetilde\varphi^s}$ of $\widetilde\varphi^s$ has measure zero and the map $T_{\widetilde\varphi^c}(q)=\widetilde{\varphi}(q)^{-1}q\widetilde{\varphi}(q)$ is a bijection of $\partial\mathbb B\backslash Z_{\widetilde\varphi^s}$ to itself, hence, for $\Sigma$-almost every $q$, it holds
\[
|\widetilde{\varphi}*\varphi^c(q)|=|\widetilde{\varphi}(q)||\varphi^c((\widetilde{\varphi}(q))^{-1}q\widetilde{\varphi}(q))|=1.
\]
Thus,  $\Sigma$-almost everywhere on $\p\B$, using also \eqref{modulophi}, we have that 
\[|\widetilde{\varphi}*\varphi^c(q)|=1=|\varphi(\bar q)|^2=\RRe(\widetilde{\varphi}*\varphi^c(q)),\]
which implies that $\widetilde{\varphi}*\varphi^c(q)$ is a (positive) real number and hence equal to its modulus $\Sigma$-almost everywhere on $\p\B$, that is,
\[\widetilde{\varphi}*\varphi^c(q)=|\widetilde{\varphi}*\varphi^c(q)|=1.\]
Similarly we also obtain $\varphi^c\ast\widetilde\varphi=1$ and the proof is complete.

\end{proof}
\begin{oss}
The previous result provides also a characterization for the inner functions of $H^2(\p\B)$. 
\end{oss}

We conclude the section proving Theorem \ref{thm-idem}. Before actually proving the theorem we collect some examples of idempotent functions to show that the class of slice idempotent functions is not trivial. 

\begin{itemize}
\item  A first example of an idempotent function is the characteristic function of  a set which is symmetric with respect to the real axis. Thus, let $E\subseteq \p\B$ be such a circular set. Then \[\chi_E*\chi_E(q)=\left\{\begin{array}{l r}
0 & \text{if $\chi_E(q)=0$}\,;\\
\chi_E(q)\chi_E(q) 	& \text{if $\chi_E(q)=1$\,,} 
\end{array} \right.\]
i.e., $\chi_E*\chi_E(q)=\chi_E(q)$. In this case, $\chi_E$ is constant on every sphere, either equal to $1$ or to $0$.
\item A second example is peculiarly quaternionic. Let $J$ be any imaginary unit, and consider the slice constant function (defined outside the real axis) $\ell(e^{tI})=\frac{1-IJ	}{2}$. Then, by direct computation it is possible to prove that $\ell*\ell(e^{tI})=\ell(e^{tI})$ for any $e^{tI}\in \p\B$.
The function $\ell$ can be interpreted as the slice extension of the characteristic function of the semi-circle $\{e^{tJ}: t\in (0,\pi)\}$ in $L_J$. 

A property that is worth mentioning is that, since these functions have exactly one zero on each sphere, then their symmetrization are identically vanishing.

\item A more complicated example

is given by
\[f(e^{tI})=\frac{1+I (\cos(t)i+\sin(t)j)}{2}. \] 
Again, by direct computation it is possible to prove that $f*f=f$.
As in the previous example, on each sphere $e^{t\s}$ contained in $\p\B$, the function $f$ takes at exactly one point the value $0$ and at its conjugate the value $1$. This time the imaginary unit $\cos(t)i+\sin(t)j$ of the zero of $f$ depends on the sphere. This example was suggested to us by A. Altavilla. Similar functions are studied for different purposes in \cite{adF}.
\end{itemize}

\begin{proof}[Proof of Theorem \ref{thm-idem}]
It is a simple matter of computations to verify that a function $f$ of the described form satisfies $f\ast f=f$. We now prove that all  idempotent functions are of such a form. From \eqref{zerosetsim} we know that if $f $ vanishes at a point $e^{tI}$, then $f^s|_{e^{t\s}}\equiv 0$. Thus, let us consider first a sphere where $f^s|_{e^{t\s}}\neq 0$. Then, equation $f*f=f$ implies that, for any $I\in \s$,
\[
f^s(e^{tI})=f^s*f(e^{tI})=f^s(e^{tI})f(e^{tI}),
\] 
hence $f(e^{tI})=1$ for any $I\in \s$.
Secondly, consider now a sphere $e^{t\s}$ such that  $f^s|_{e^{t\s}}\equiv 0$. In this case, since $f$ satisfies \eqref{repfor}, there are two possibilities: either $f|_{e^{t\s}}\equiv 0$ as well or $f|_{e^{t\s}}$ has an isolated zero $q_0=e^{tJ}$. If $f$ is constantly equal to zero  we are done; alternatively, consider $q\neq q_0$. Then, equation $f*f=f$ implies that
\[
f(q)=f(q)f(f(q)^{-1}q f(q))= f(q)f(T_{f^c}(q)),
\] 
i.e., $f(T_{f^c}(q))=1$ for every $q\neq q_0$. Recalling that the transformation $T_{f^c}$ maps each sphere $e^{t\s}$ into itself, we just proved that if $f$ has an isolated zero $q_0=e^{tJ}$ on the sphere $e^{t\s}$, then $f$ assumes value $1$ in one point of the same sphere, namely, the point $T_{f^{c}}(q)=e^{tK}$ for some $K\in\s$. We remark that $e^{tK}$ is the only point of the sphere $e^{t\s}$ where $f$ assumes value $1$,
in fact, if $f(q)=1$ for some other $q\in e^{t\s}$, then the representation formula would imply $f|_{e^{t\s}}\equiv1$. This is not the case since $f(q_0)=0$.
We also remark that $e^{tK}$ is, in particular, the isolated zero on $e^{t\s}$ of $f^c$, the conjugate of $f$. This follows from the fact that, for every $q\in e^{t\s}, q\neq q_0$, it holds
$$
0=f^s(q)=f(q) f^c(T_{f^c}(q))=f(q) f^c(e^{tK}).
$$
Similarly, it is possible to show that $f^c(e^{tJ})=1$.
Finally, using Formula \eqref{repfor}, we can represent the functions $f$ and $f^c$ on the sphere $e^{t\s}$ with respect to the points $e^{tJ}$ and $e^{tK}$, obtaining that
$$
f(e^{tI})=-\big((J-K)^{-1}K+I(J-K)^{-1}\big) 
$$
and 
$$
f^c(e^{tI})=((J-K)^{-1}J+I(J-K)^{-1})
$$
for any $I\in \s$.
In particular, if $f=\widetilde{f}^c$, necessarily $K=-J$.

\end{proof}
We proved Theorem \ref{thm-idem} using the Representation Formula \eqref{repfor}. If we want to use the simplified formula \eqref{repfor2}, we can reformulate the theorem as follows.
\begin{teo}\label{thm-idem-2}
Let $f\in L^2_s(\p\B)$ be such that $f*f=f$. Then on each sphere $e^{t\s}\subseteq\p\B$ the function $f$ behaves as follows. If $f|_{e^{t\s}}$ denotes the restriction of $f$ to the sphere $e^{t\s}$, then, $f|_{e^{t\s}}$ is either constant $0$ or $1$, or there exists $J=J(t)\in\s$ such that 
	\begin{equation}\label{idem-repfor}
	f(e^{tI})=\frac{1+IJ}{2}f(e^{-tJ}).
	\end{equation}
Moreover, 
$f(e^{-tJ})=1+yK$ for some $y\in \RR$ and $K\in\s$ such that $K\,\bot\, J$ (with respect to the scalar product of\, $\RR^3$). In particular, if $\widetilde{f}^c=f$, then $f(e^{-tJ})=1$.
\end{teo}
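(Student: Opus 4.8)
The plan is to read the statement off from Theorem \ref{thm-idem} after rewriting its conclusion by means of the simplified Representation Formula \eqref{repfor2}; equivalently, one could rerun the argument in the proof of Theorem \ref{thm-idem} with \eqref{repfor2} in place of \eqref{repfor}. First I would fix a sphere $e^{t\s}\subseteq\p\B$. By the proof of Theorem \ref{thm-idem}, either $f|_{e^{t\s}}$ is identically $0$ or identically $1$, or else $f^s|_{e^{t\s}}\equiv 0$ and $f$ has exactly one zero on $e^{t\s}$, which we write as $e^{tJ}$ with $J=J(t)\in\s$. In this last case the sphere is a genuine $2$-sphere, that is, $\sin t\neq 0$, so that $e^{-tJ}=e^{t(-J)}\neq e^{tJ}$; since by \eqref{repfor} a slice function vanishing at two distinct points of a sphere vanishes identically on it, we moreover get $f(e^{-tJ})\neq 0$. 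The identity \eqref{idem-repfor} is then immediate: applying \eqref{repfor2} with the pair $(J,-J)$ at an arbitrary $e^{tI}\in e^{t\s}$ and using $f(e^{tJ})=0$ one obtains
\[
f(e^{tI})=\frac{1-IJ}{2}f(e^{tJ})+\frac{1+IJ}{2}f(e^{-tJ})=\frac{1+IJ}{2}f(e^{-tJ}).
\]

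It then remains to describe $c:=f(e^{-tJ})$. Theorem \ref{thm-idem} provides $K_0=K_0(t)\in\s$ with $K_0\neq J$ and $f(e^{tK_0})=1$. Substituting $I=K_0$ in \eqref{idem-repfor} gives $\frac{1+K_0J}{2}\,c=1$, hence $c=2(1+K_0J)^{-1}$, the inverse existing precisely because $|1+K_0J|^2=2(1-\langle K_0,J\rangle)>0$ whenever $K_0\neq J$. Using $\overline{K_0J}=JK_0$ together with the identity $JK_0=-\langle J,K_0\rangle+J\times K_0$ for imaginary units, a short computation gives
\[
c=\frac{1+JK_0}{1-\langle J,K_0\rangle}=1+\frac{J\times K_0}{1-\langle J,K_0\rangle}.
\]
Hence $\RRe c=1$ while $\IIm c$ is a real multiple of $J\times K_0$, which is orthogonal to $J$; writing $K=K(t)\in\s$ for the unit vector in the direction of $J\times K_0$ (any unit vector orthogonal to $J$ when $J\times K_0=0$) and $y=y(t)\in\RR$ for the corresponding scalar, we conclude $f(e^{-tJ})=1+yK$ with $K\perp J$. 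Finally, if moreover $\widetilde f^c=f$, Theorem \ref{thm-idem} forces $K_0=-J$, so $J\times K_0=0$ and $c=1$.

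The hard part, such as it is, is nothing new: the structural content of the statement is already contained in Theorem \ref{thm-idem}, and what is left is essentially a change of representation formula. The two points that require a little care are, first, observing that the ``non-trivial'' spheres are precisely the $2$-spheres with $\sin t\neq 0$ (this is what guarantees $e^{-tJ}\neq e^{tJ}$ and hence $f(e^{-tJ})\neq 0$, so that \eqref{idem-repfor} is not vacuous), and second, the elementary quaternionic computation above identifying $\IIm f(e^{-tJ})$ with a real multiple of $J\times K_0$, from which the orthogonality $K\perp J$ is immediate.
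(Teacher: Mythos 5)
Your argument is correct, and it diverges from the paper's proof in the step that pins down $a:=f(e^{-tJ})$. The trichotomy and formula \eqref{idem-repfor} are handled just as in the paper, by deferring to the proof of Theorem \ref{thm-idem}. For the remaining step, the paper stays self-contained on the sphere: it applies $f\ast f=f$ once more at the point $e^{-tJ}$, which via \eqref{idem-repfor} yields the constraint $aJ+Ja=2J$, and then solves this by writing $a=x+yK$, forcing $x=1$ and $y\langle K,J\rangle=0$; the case $\widetilde f^c=f$ is only claimed to follow by ``analogous computations''. You instead import from Theorem \ref{thm-idem} the point $e^{tK_0}$ where $f$ takes the value $1$ (this is read off the displayed formula of that theorem by evaluating at $I=K_0$, or from its proof), substitute $I=K_0$ into \eqref{idem-repfor}, and solve explicitly: $a=2(1+K_0J)^{-1}=1+\frac{J\times K_0}{1-\langle J,K_0\rangle}$, whence $\RRe a=1$ and $\IIm a\perp J$. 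This buys two things the paper leaves implicit: a closed formula relating the parameters $(y,K)$ of this statement to the unit $K_0$ of Theorem \ref{thm-idem}, and an immediate proof of the final clause (if $\widetilde f^c=f$ then $K_0=-J$, so $J\times K_0=0$ and $a=1$) in place of the unproven ``analogous computations''. Conversely, the paper's route uses only the idempotency relation on the given sphere and does not depend on the precise normal form of Theorem \ref{thm-idem}. Your auxiliary observations are accurate: in the third case $\sin t\neq 0$, so $e^{-tJ}\neq e^{tJ}$ and $f(e^{-tJ})\neq 0$, and $1+K_0J$ is invertible because $|1+K_0J|^2=2(1-\langle K_0,J\rangle)>0$ whenever $K_0\neq J$.
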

\begin{proof}
 The fact that on each sphere $f$ is constant (either $0$ or 1) or satisfies \eqref{idem-repfor} follows as in the proof of Theorem \ref{thm-idem}. In particular, equation \eqref{idem-repfor} holds if $f$ has an isolated zero $e^{tJ}$ on the sphere $e^{t\s}$. Thus, it only remains to investigate the term $f(e^{-tJ})$. In order to ease the notation in the computations, we set $a:= f(e^{-tJ})$. Then, from equation $f\ast f=f$ and \eqref{idem-repfor} we get
 $$
 a=af(a^{-1}e^{-tJ}a)
 $$
 i.e., $f(a^{-1}e^{-tJ}a)=1$. Computing $f(a^{-1}e^{-tJ}a)$ explicitly we obtain the equation
 $$
 \frac{1-(a^{-1}Ja)J}{2}a=1,
 $$
that holds if and only if
\begin{equation}\label{a-comp}
aJ+Ja=2J.
\end{equation}
In particular, we observe that $\RRe(aJ+Ja)=0$. Assuming now that $a$ is of the form $a=x+yK$ for some $K\in\s$ and $x,y\in\RR$, equation \eqref{a-comp} holds true if and only if
$$
2xJ+y(KJ+JK)=2J.
$$
It is well-known that $KJ=-\left<K,J\right>+K\times J$ where $\left<\cdot,\cdot\right>$ and $\times$ denote the standard scalar and cross product in $\RR^3$ respectively. Thus, equation \eqref{a-comp} becomes
$$
2xJ-2y\left<K,J\right>=2J
$$
and this last equality is satisfied if and only if $x=1$ and $y=0$ or
$x=1$, $y\neq0$ and $K\,\bot\, J$, 
that is, if and only if 
$a=f(e^{-tJ})=1+yK$ for $y\in\rr$ and $K\bot J$, as we wished to show. 
In particular, if $f=\widetilde{f}^c$, with analogous computations it is possible to prove that $a=f(e^{-tJ})=1$.
\end{proof}

\section{Multiplier operators on $L^2_s(\p\B)$}\label{sec-multiplier-operators}

In this short section we extend to the setting of slice $L^2$ functions some concerning multiplier operators on $H^2(\p\B)$ proved in \cite{milanesi-nevanlinna}. The results in this section are not hard to prove and follow from some easy observations. Nonetheless, for the reader's convenience and for future reference, we state the results here as propositions.  

 Namely, we study the boundedness of $M_\varphi$, the multiplier operator associated to a generic slice measurable function $\varphi$, on $L^2_s(\p\B)$ and we explicitly write $M^\dagger_\varphi$, the adjoint operator of $M_\varphi$. The following result is standard and we do not include the proof.

\begin{pro}
	Let $\varphi$ be a measurable slice function on $\partial \B$. Then, the multiplier operator $M_\varphi:L^2_s(\partial \B) \to L^2_s(\partial \B), f \mapsto \varphi*f$, is a bounded linear operator if and only if $\varphi \in L_s^{\infty}(\partial \B)$. 
\end{pro}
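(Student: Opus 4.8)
The plan is to prove the two implications separately, in each case reducing the $*$-product to an ordinary operation so that classical $L^2$ multiplier arguments apply. For sufficiency, when $\varphi\in L^\infty_s(\partial\B)$ I would use the pointwise description of the $*$-product recalled in the introduction: off the zero set $Z_\varphi$ one has $\varphi*f(q)=\varphi(q)f(T_{\varphi^c}(q))$, so that
$$
|\varphi*f(q)|\le\|\varphi\|_{L^\infty_s(\partial\B)}\,|f(T_{\varphi^c}(q))|
$$
for $\Sigma$-almost every $q$ (on $Z_\varphi$ both sides vanish). Integrating and changing variables through $p=T_{\varphi^c}(q)$, which is a bijection of $\partial\B\setminus Z_{\varphi^s}$ onto itself carrying each sphere $e^{t\s}$ to itself, would give $\|\varphi*f\|_2\le\|\varphi\|_{L^\infty_s(\partial\B)}\|f\|_2$ and hence boundedness of $M_\varphi$, with sharp constant. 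The step requiring care is that this substitution does not distort the measure $\Sigma$: on each sphere $T_{\varphi^c}$ acts by the conjugation $e^{tI}\mapsto e^{tI'}$ with $I'=\varphi(e^{tI})^{-1}I\varphi(e^{tI})$, and one must check that it preserves (or at least is nonsingular with respect to) $d\sigma\,dt$.

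To bypass this measure-theoretic point, and as the route I would use for the converse in any case, I would fix $J\in\s$ and split slice functions along the slice $L_J$. Choosing $\mathcal J\in\s$ with $\mathcal J\perp J$ and identifying $L_J\cong\C$, every slice function restricts as $f|_{L_J}=F+G\mathcal J$ with $F,G\in L^2(\partial\D)$, and the integral formula for the inner product gives $\|f\|_2^2=\|F\|^2_{L^2(\partial\D)}+\|G\|^2_{L^2(\partial\D)}$. A direct coefficient computation (a Splitting Lemma) shows that $\varphi*f$ restricted to $L_J$ is again of this form, its two complex components being obtained from $F,G$ and from the components $\Phi,\Psi$ of $\varphi|_{L_J}$ by an explicit twisted bilinear expression involving coefficient-conjugation. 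Estimating each component and using $|\Phi|^2+|\Psi|^2=|\varphi|^2\le\|\varphi\|_{L^\infty_s(\partial\B)}^2$ almost everywhere on the slice again yields boundedness of $M_\varphi$ (with a possibly non-sharp constant).

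For necessity I would test $M_\varphi$ on slice functions with real power-series coefficients. Testing first on $f\equiv 1$ gives $\varphi=M_\varphi 1\in L^2_s(\partial\B)$, so $|\varphi(e^{J\theta})|^2\in L^1(\partial\D)$. For $f$ with real coefficients the twist collapses ($G=0$ and the conjugation is trivial), and one finds
$$
\|\varphi*f\|_2^2=\frac{1}{2\pi}\int_0^{2\pi}|\varphi(e^{J\theta})|^2\,|F(e^{J\theta})|^2\,d\theta,\qquad \|f\|_2^2=\frac{1}{2\pi}\int_0^{2\pi}|F(e^{J\theta})|^2\,d\theta .
$$
Boundedness of $M_\varphi$ then forces $\int|\varphi|^2|F|^2\le\|M_\varphi\|^2\int|F|^2$ against all squared moduli of real-coefficient functions. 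Since such $|F|^2$ are exactly the nonnegative even trigonometric weights, I would choose real-coefficient Fejér-type approximate identities concentrating simultaneously at $\theta_0$ and $-\theta_0$; this yields $\tfrac12(|\varphi(e^{J\theta_0})|^2+|\varphi(e^{-J\theta_0})|^2)\le\|M_\varphi\|^2$ for a.e. $\theta_0$, hence $\varphi|_{L_J}\in L^\infty(\partial\D)$. Finally I would upgrade this slice bound to $\varphi\in L^\infty_s(\partial\B)$ through the Representation Formula \eqref{repfor2}: since $\varphi$ is affine in $I$ on each sphere $e^{t\s}$, its values there are controlled by $\varphi(e^{tJ})$ and $\varphi(e^{-tJ})$, giving $\|\varphi\|_{L^\infty_s(\partial\B)}\le C\,\|\varphi|_{L_J}\|_{L^\infty(\partial\D)}<\infty$ for a universal constant $C$.

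The main obstacle is the non-commutativity of the $*$-product: unlike the classical disc, $\varphi*f$ is not the pointwise product along a slice, so the reduction to a scalar multiplier passes through the twisted Splitting Lemma bookkeeping, and in the converse the reality constraint on the test functions only controls the symmetrized modulus $\tfrac12(|\varphi(e^{J\theta})|^2+|\varphi(e^{-J\theta})|^2)$, from which the genuine pointwise bound must be recovered. A secondary subtlety, pertinent to the pointwise route of the first paragraph, is that each slice $L_J\cap\partial\B$ is $\Sigma$-null, so the passage between a slice $L^\infty$ estimate and the global $L^\infty_s(\partial\B)$ norm cannot be made by a direct almost-everywhere comparison and must instead go through the Representation Formula.
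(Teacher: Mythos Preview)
The paper itself omits the proof entirely, declaring the result ``standard'' (the analogous statement for $H^2(\partial\B)$ is in \cite{milanesi-nevanlinna}, and the slice $L^p$ framework is set up in \cite{sarfatti-indiana}). So there is no argument in the paper to compare your proposal against; what follows is simply an assessment of your sketch.

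Your sufficiency argument is sound. The direct change of variables through $T_{\varphi^c}$ is indeed problematic --- on each sphere $e^{t\s}$ the map $I\mapsto \varphi(e^{tI})^{-1}I\varphi(e^{tI})$ need not preserve $d\sigma$ --- but your fallback via the slice works: from $|\varphi*f(e^{tJ})|=|\varphi(e^{tJ})|\,|f(T_{\varphi^c}(e^{tJ}))|$ and the Representation Formula bound $|f(e^{tI})|\le |f(e^{tJ})|+|f(e^{-tJ})|$ one gets $\|M_\varphi\|\le 2\|\varphi\|_{L^\infty_s(\partial\B)}$ without any splitting bookkeeping.

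Your necessity argument is also correct, though slightly more delicate than you indicate. Two points are worth making explicit. First, for real-coefficient $f$ one has $\varphi*f=f*\varphi$ and $f(e^{tJ})\in L_J$, so $T_{f^c}(e^{tJ})=e^{tJ}$ and hence $\varphi*f(e^{tJ})=f(e^{tJ})\varphi(e^{tJ})$ pointwise on the slice; this is what turns the problem into a classical multiplier estimate, and it is cleaner than invoking the full splitting lemma. Second, your Fej\'er test $F_N(\theta)=K_N(\theta-\theta_0)+K_N(\theta+\theta_0)$ does have real Fourier coefficients $2(1-|k|/(N+1))\cos(k\theta_0)$, and $|F_N|^2/\|F_N\|_2^2$ is a genuine approximate identity for $\tfrac12(\delta_{\theta_0}+\delta_{-\theta_0})$, but the conclusion you draw from the symmetrized bound is $|\varphi(e^{J\theta_0})|^2\le 2\|M_\varphi\|^2$ (since you only control the even part of $|\varphi|^2$ and must use nonnegativity), not $|\varphi(e^{J\theta_0})|^2\le\|M_\varphi\|^2$. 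This still gives $\varphi|_{L_J}\in L^\infty(\partial\D)$, and the upgrade to $L^\infty_s(\partial\B)$ via the Representation Formula is exactly right.
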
	

We would like to refine the previous result and prove that the operator $M_{\varphi}: L^2_s(\partial \B)\to L^2_s(\partial \B)$ is an isometry if we assume that $|\varphi|=1$ $\Sigma$-almost everywhere on $\partial \B$. This fact is a consequence of the following proposition.
\begin{pro}\label{adjoint}
 Let us consider the multiplier operator $M_\varphi:L^2_s(\partial \B)\to L^2_s(\partial \B)$ where $\varphi\in L^\infty_s(\partial \B)$. Then, the adjoint operator $M^\dagger_\varphi: L^2_s(\p\B)\to L^2_s(\p \B)$ is the multiplier operator associated to $\widetilde\varphi^c$, that is, $M^\dagger_\varphi=M_{\widetilde \varphi^c}$.
\end{pro}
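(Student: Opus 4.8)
The plan is to verify the adjoint identity
\[
\langle M_\varphi f, g\rangle_{L^2_s(\p\B)}=\langle f, M_{\widetilde\varphi^c}g\rangle_{L^2_s(\p\B)}\qquad\text{for all }f,g\in L^2_s(\p\B)
\]
by a short computation on the orthonormal basis $\{q^n\}_{n\in\Z}$, and then to invoke uniqueness of the adjoint. First I would record the two facts that make $M_{\widetilde\varphi^c}$ a legitimate candidate: since $q\mapsto\bar q$ is a diffeomorphism of $\p\B$ and since $\|\psi\|_{L^\infty_s(\p\B)}=\|\psi^c\|_{L^\infty_s(\p\B)}$ (recalled just before Proposition \ref{modulophic}), from $\varphi\in L^\infty_s(\p\B)$ we get $\widetilde\varphi\in L^\infty_s(\p\B)$ and hence $\widetilde\varphi^c\in L^\infty_s(\p\B)$; by the preceding proposition $M_{\widetilde\varphi^c}$ is a bounded operator on $L^2_s(\p\B)$. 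I would also note that, writing $\varphi(q)=\sum_{n\in\Z}q^n c_n$ and using $\bar q=q^{-1}$ on $\p\B$, one has $\widetilde\varphi(q)=\varphi(\bar q)=\sum_{n\in\Z}q^{-n}c_n=\sum_{n\in\Z}q^n c_{-n}$, whence $\widetilde\varphi^c(q)=\sum_{n\in\Z}q^n\overline{c_{-n}}$.

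Since both $M_\varphi$ and $M_{\widetilde\varphi^c}$ are bounded and right $\HH$-linear (the $\ast$-product pulls right scalars out of its second factor), and since $\{q^n\}_{n\in\Z}$ is an orthonormal basis of $L^2_s(\p\B)$, continuity of the inner product reduces the claim to checking $\langle M_\varphi q^n, q^m\rangle=\langle q^n, M_{\widetilde\varphi^c}q^m\rangle$ for all $m,n\in\Z$. On one side, $\varphi\ast q^n(q)=\sum_{k\in\Z}q^{k+n}c_k=\sum_{j\in\Z}q^{j}c_{j-n}$, so its coefficient against $q^m$ is $c_{m-n}$ and $\langle M_\varphi q^n, q^m\rangle=c_{m-n}$. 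On the other side, $\widetilde\varphi^c\ast q^m(q)=\sum_{j\in\Z}q^{j+m}\overline{c_{-j}}=\sum_{l\in\Z}q^{l}\overline{c_{m-l}}$, so its $q^n$-coefficient is $\overline{c_{m-n}}$; since the inner product conjugates its second entry, $\langle q^n, M_{\widetilde\varphi^c}q^m\rangle=\overline{\overline{c_{m-n}}}=c_{m-n}$. The two agree, so the identity holds on the basis, hence on all of $L^2_s(\p\B)$ by sesquilinearity and continuity, and therefore $M^\dagger_\varphi=M_{\widetilde\varphi^c}$.

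I do not expect a genuine obstacle here; the only care needed is bookkeeping: keeping track of which argument of the quaternionic inner product is conjugated and on which side quaternionic scalars act, confirming the right $\HH$-linearity of both multipliers, and handling the index shift identifying the power series of $\widetilde\varphi$ and $\widetilde\varphi^c$. As an alternative one may skip the basis and instead expand $f,g,\varphi$ in power series and match the two double sums directly — both sides collapse, after a substitution of indices, to $\sum_{m,k\in\Z}\overline{b_{m+k}}\,c_k\,a_m$ — but the computation on $\{q^n\}$ is the cleanest route.
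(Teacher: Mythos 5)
Your proof is correct and follows essentially the same route as the paper: both arguments reduce to computing matrix entries of the multiplier against the orthonormal basis $\{q^n\}_{n\in\Z}$ via the power-series form of the $\ast$-product (the paper reads off the coefficients of $M_\varphi^\dagger f$ directly, while you verify the adjoint identity $\langle M_\varphi q^n,q^m\rangle=\langle q^n,M_{\widetilde\varphi^c}q^m\rangle$ and extend by right $\HH$-linearity and continuity). Your index bookkeeping and the identification $\widetilde\varphi^c(q)=\sum_{n\in\Z}q^n\overline{c_{-n}}$ agree with the paper's computation.
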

\begin{proof}
 Consider the power series expansion of $\varphi(q)=\sum_{n\in\Z}q^n a_n$. Then, $\widetilde\varphi^c(q)=\sum_{n\in\Z}q^n\overline{a_{-n}}$. If $f(q)=\sum_{n\in\Z} q^nb_n$ is a function in $L^2_s(\p\B)$, by definition of adjoint operator, it holds
 \begin{align*}
  \big<M^\dagger_\varphi f, q^n\big>_{L^2_s(\p\B)}&= \big< f,M_\varphi q^n\big>_{L^2_s(\p\B)}= \sum_{k\in\Z}\overline{a_{k-n}}b_k.
 \end{align*}
Since $\{q^n\}_{n\in\Z}$ is a orthonormal basis for $L^2_s(\p\B)$ we conclude that
$$
M^\dagger_\varphi f(q)= \sum_{n\in\Z}q^n \sum_{k\in\Z}\overline{a_{k-n}}b_k= \sum_{n\in\Z}q^n\sum_{k\in\Z}\overline{a_{-k}}b_{n-k}=M_{\widetilde\varphi^c}f(q)
$$
 as we wished to show.
\end{proof}

\begin{coro}\label{isometria}
	Let $\varphi\in L_s^{\infty}(\partial\B)$. Then, $|\varphi|=1$ $\Sigma$-almost everywhere on $\partial \B$ if and only if its associated multiplier operator $M_\varphi:L_s^2(\partial \B) \to L_s^2(\partial \B)$ is a surjective isometry.
\end{coro}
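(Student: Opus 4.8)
The plan is to deduce everything from Proposition \ref{adjoint}, which identifies $M_\varphi^\dagger = M_{\widetilde\varphi^c}$, together with Lemma \ref{modulo}, which characterizes the condition $|\varphi|=1$ a.e. in terms of the $\ast$-identities $\widetilde\varphi * \varphi^c = \varphi^c * \widetilde\varphi = 1$. First I would handle the forward direction: assume $|\varphi|=1$ $\Sigma$-almost everywhere. Then $\varphi\in L^\infty_s(\p\B)$ and, by Proposition \ref{modulophic}, $\varphi^c\in L^\infty_s(\p\B)$ as well, so all the multiplier operators involved are bounded. Compute $M_\varphi^\dagger M_\varphi = M_{\widetilde\varphi^c} M_\varphi = M_{\widetilde\varphi^c * \varphi}$. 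Now $\widetilde\varphi^c * \varphi = \widetilde{(\varphi^c)} * \varphi$, and applying the tilde operation to the identity $\varphi^c * \widetilde\varphi = 1$ from Lemma \ref{modulo} gives $\widetilde{\varphi^c} * \varphi = \widetilde{1} = 1$. Hence $M_\varphi^\dagger M_\varphi = M_1 = \mathrm{Id}$, so $M_\varphi$ is an isometry. Similarly $M_\varphi M_\varphi^\dagger = M_{\varphi * \widetilde\varphi^c}$, and applying the tilde to $\widetilde\varphi * \varphi^c = 1$ yields $\varphi * \widetilde\varphi^c = 1$, so $M_\varphi M_\varphi^\dagger = \mathrm{Id}$; thus $M_\varphi$ is also surjective, being a co-isometry. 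Together $M_\varphi$ is a surjective isometry (indeed a unitary).

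For the converse, suppose $M_\varphi$ is a surjective isometry. First one must note that boundedness of $M_\varphi$ already forces $\varphi\in L^\infty_s(\p\B)$ by the boundedness Proposition preceding \ref{adjoint}, so $M_\varphi^\dagger = M_{\widetilde\varphi^c}$ makes sense. A surjective isometry on a Hilbert space is unitary, so $M_\varphi^\dagger M_\varphi = M_\varphi M_\varphi^\dagger = \mathrm{Id}$. Reading off the first equality via Proposition \ref{adjoint} gives $M_{\widetilde\varphi^c * \varphi} = \mathrm{Id}$, and applying both sides to the constant function $1\in L^2_s(\p\B)$ yields $\widetilde\varphi^c * \varphi = 1$ as an element of $L^2_s(\p\B)$, hence $\Sigma$-almost everywhere on $\p\B$. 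Applying the tilde operation gives $\varphi^c * \widetilde\varphi = 1$; likewise the second unitarity relation gives $\varphi * \widetilde\varphi^c = \widetilde{(\widetilde\varphi * \varphi^c)}$, whence $\widetilde\varphi * \varphi^c = 1$ as well. Now Lemma \ref{modulo} applies directly — we have exactly $\widetilde\varphi * \varphi^c = \varphi^c * \widetilde\varphi = 1$ $\Sigma$-almost everywhere — and concludes $|\varphi| = 1$ $\Sigma$-almost everywhere on $\p\B$.

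The only delicate point is bookkeeping with the tilde and conjugate operations and how they interact with $\ast$: one needs the identities $\widetilde{f\ast g} = \widetilde f \ast \widetilde g$ (which holds because $\widetilde{q^n} = q^{-n}$ and conjugation of the variable reverses the slice-product convolution in a controlled way) and $(f\ast g)^c = g^c \ast f^c$, already recalled in Section \ref{prelim}, together with $\widetilde{\widetilde f} = f$ and $(f^c)^c = f$. With these in hand no genuine obstacle remains; the content of the corollary is entirely carried by Proposition \ref{adjoint} and Lemma \ref{modulo}, and the proof is a short formal manipulation. One should also remark that evaluating the operator identities on the constant $1$ is legitimate since $1 = q^0$ is a basis element of $L^2_s(\p\B)$, so the resulting equalities are genuine equalities of $L^2$ functions, hence hold $\Sigma$-almost everywhere.
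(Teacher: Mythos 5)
Your proof is correct and follows essentially the same route as the paper: both directions reduce to Proposition \ref{adjoint} and Lemma \ref{modulo}, with the unitarity relations $\widetilde\varphi^c\ast\varphi=\varphi\ast\widetilde\varphi^c=1$ obtained by a formal manipulation (you use the identity $\widetilde{f\ast g}=\widetilde f\ast\widetilde g$, which indeed holds and is easily checked on power series, where the paper instead applies the conjugation rule $(f\ast g)^c=g^c\ast f^c$). The only cosmetic difference is that in the converse you invoke the abstract fact that a surjective isometry is unitary, whereas the paper just tests the isometry on the orthonormal basis $\{q^n\}_{n\in\Z}$; both are fine.
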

\begin{proof}
On the one hand, if $|\varphi|=1$ Lemma \ref{modulo} guarantees that $\widetilde\varphi\ast\varphi^c=\varphi^c\ast\widetilde\varphi=1$, from which we also get
$$
(\widetilde\varphi\ast\varphi^c)^c=(\varphi^c\ast\widetilde\varphi)^c=1,
$$
that is,
$$
\widetilde\varphi^c\ast\varphi=\varphi\ast\widetilde\varphi^c=1.
$$
Thus, from the previous proposition we obtain $M^\dagger_\varphi M_\varphi=M_\varphi M^\dagger_\varphi= Id$, hence $M_\varphi$ is a unitary operator and, in particular, a surjective isometry.
On the other hand if $M_\varphi$ is a surjective isometry, then for any $m,n\in\Z$,
\[\langle q^m, q^n \rangle_{L^2_s(\p\B)}=\langle M_\varphi q^m, M_\varphi q^n \rangle_{L^2_s(\p\B)}=\langle  q^m, M_\varphi^\dagger M_\varphi q^n \rangle_{L^2_s(\p\B)}\]
namely $M_\varphi^\dagger M_\varphi=Id$. Thus $\widetilde{\varphi}^c*\varphi=1$ and, thanks to Lemma \ref{modulo}, we conclude.
\end{proof}

\section{Invariant subspaces and the inner-outer factorization}\label{sec-invariant-subspaces}
In this section we prove our main results.

\begin{proof}[Proof of Theorem \ref{thm-doubly-invariant}]
	Suppose first that $\varphi\in L^2_s(\p\B)$ is such that $\varphi\ast \varphi=\varphi$ and $\widetilde {\varphi^c}=\varphi$ and let us prove that $\varphi*L^2_s(\p\B)$ is a closed doubly invariant subspace. Consider the multiplier operator $M_\varphi$ associated with $\varphi$. Thanks to Theorem \ref{thm-idem}, $\varphi\in L^\infty_s(\p\B)$, hence the operator $M_\varphi:L^2_s(\p\B)\to L^2_s(\p\B)$ is bounded. The hypothesis on $\varphi$ clearly imply that $M_\varphi ^2=M_\varphi$ and $M^\dagger_\varphi=M_\varphi$. That is, $M_\varphi$ is a self-adjoint projection operator.  In particular, we get that $\varphi\ast L^2_s(\p\B)$, the range of $M_\varphi$, is a closed subspace of $L^2_s(\p\B)$. See, for instance, \cite{gmp}. 

	The double invariance of $\varphi\ast L^2_s(\p\B)$ is immediate since
	\[q*\varphi*L^2_s(\p\B)=\varphi*q*L^2_s(\p\B)= \varphi*L^2_s(\p\B).\]

	Consider now a closed subspace $\clK\subseteq L^2_s(\p\B)$ such that  $M_q \clK = \clK$. Let $\varphi=P_\clK(1)$ be the projection on $\clK$ of the constant function $1$. Then $1-\varphi \in \clK^\perp$. Since $\clK$ is doubly invariant, if $\varphi$ has power series expansion $\varphi(q)=\sum_{n \in \Z} q^{n}a_n$,  we get that, for any $k\in\Z$
	\begin{equation}\label{coeff2}
	0=\langle  M^k_q \varphi, 1-\varphi \rangle_{ L^2(\p\B)}=a_{-k}-\sum_{n\in\Z} \overline{a_n}a_{n-k}.
	\end{equation} 
Then Equation \eqref{coeff2} yields that, 	$\Sigma$-almost everywhere on $\p\B$, 
	\begin{equation*}\label{idem}
	\widetilde{\varphi}(q)=\sum_{k \in\Z} q^{k}a_{-k}=\sum_{k \in\Z} q^{k}\sum_{n\in \Z}\overline{a_n}a_{n-k}=\varphi^c*\widetilde{\varphi}(q). 
	\end{equation*}
It remains to show that $\clK=\varphi*L^2_s(\p\B)$.
Since $\varphi\in \clK$ and $\clK$ is doubly invariant, we get that $q^k\ast\varphi=\varphi\ast q^k\in \clK$ for any $k\in\Z$ and hence $\varphi\ast L^2_s(\p\B)\subseteq \clK$. 
Suppose now that $\varphi*L^2_s(\partial \B)\subsetneq \clK$. Then, since $\clK$ is a closed subspace, there exists $f\in \clK$ which is orthogonal to $\varphi*L_s^2(\partial \B)$. Then, for any $k\in \Z$,
\[0=
\langle  f, M_\varphi q^k  \rangle_{ L_s^2(\p\B)}=\langle M_\varphi^\dagger f, q^k  \rangle_{ L_s^2(\p\B)}=\langle \varphi*f, q^k  \rangle_{ L_s^2(\p\B)}\]
since $M_\varphi$ is self-adjoint. Thus, we get that $\varphi\ast f=0$. Moreover, for any $k\in\Z$,
\begin{align*}
 \langle (1-\varphi)\ast f, q^k\rangle_{ L_s^2(\p\B)}&=\langle M_{1-\varphi}f,q^{k}\rangle_{ L_s^2(\p\B)}=\langle f, M^\dagger_{1-\varphi} q^k\rangle_{ L_s^2(\p\B)}=\langle f, (1-\varphi)\ast q^k\rangle_{ L_s^2(\p\B)}\\
 &=\langle f, q^k\ast(1-\varphi)\rangle_{ L_s^2(\p\B)}=\langle q^{-k}\ast f,1-\varphi\rangle_{ L_s^2(\p\B)}=0
\end{align*}
where the last equality is due to the orthogonality of $1-\varphi$ and $M_{q^{-k}}(\clK)$.
Thus, $(1-\varphi)\ast f=0$. Hence, $f=1*f=\varphi*f+(1-\varphi)*f=0$ as we wished to show. 

To conclude the proof it remains to prove the uniqueness of the function $\varphi$. Assume that there exist two functions $\varphi_1,\varphi_2$ that satisfy $\widetilde\varphi_i\ast\varphi_i^c=\widetilde\varphi_i$, $i=1,2$, and $\mathcal K=\varphi_1\ast L^2_s(\p\B)=\varphi_2\ast L^2_s(\p\B)$. Then, we want to show that $\varphi_1=\varphi_2$. Since $M_{\varphi_i}$, $i=1,2$, is a projection and a self-adjoint operator, we get
$$
0=\langle 1-\varphi_1, \varphi_1\ast f\rangle_{L^2_s(\p\B)}=\langle 1-\varphi_2,\varphi_2\ast f\rangle_{L^2_s(\p\B)}
$$
for any $f\in L^2_s(\p\B)$. Thus, since $\varphi_1\ast L^2_s(\p\B)=\varphi_2\ast L^2_s(\p\B)$, we also get
$$
0=\langle 1-\varphi_1, \varphi_2\ast f\rangle_{L^2_s(\p\B)}=\langle 1-\varphi_2,\varphi_1\ast f\rangle_{L^2_s(\p\B)}
$$
for any $f\in L^2_s(\p \B)$. In particular,
$$
0=\langle 1-\varphi_1,\varphi_2\ast f\rangle_{L^2_s(\p\B)}=\langle \varphi_2-\varphi_2\ast\varphi_1, f\rangle_{L^2_s(\p\B)}
$$
for any $f\in L^2_s(\p\B)$. Hence, $\varphi_2=\varphi_2\ast\varphi_1$. Similarly we obtain also $\varphi_1=\varphi_1\ast\varphi_2$. Finally,
$$
\varphi_2=\widetilde\varphi_2^c=\widetilde\varphi_1^c\ast\widetilde\varphi^c_2=\varphi_1\ast\varphi_2=\varphi_1
$$
as we wished to show. This concludes the proof.
\end{proof}

\begin{proof}[Proof of Theorem \ref{thm-simply-invariant}]
Consider first $\clK=\varphi * H^2(\partial \B)$ for some measurable slice $\varphi$ such that $|\varphi|=1$ $\Sigma$-almost everywhere on $\partial \B$. Thanks to Corollary \ref{isometria} we have that $\clK$ is the image of a closed subspace through an isometry and hence it is a closed subspace itself.

Let us now show that $\clK$ is invariant but not doubly invariant with respect to the shift operator. It holds,
\[ M_q \clK=q*\clK=q*\varphi*H^2(\partial \B)=\varphi*q*H^2(\partial \B)\subseteq \varphi*H^2(\partial \B)=\clK.\]			
Moreover, $\varphi$ does not belong to $q*\varphi*H^2(\partial \B)$. In fact, suppose that $\varphi\in q\ast\varphi\ast H^2(\partial\mathbb B)$. Then, there exists $g\in H^2(\partial \mathbb B)$ such that $\varphi=q\ast\varphi\ast g=\varphi\ast q\ast g$. Hence, from Lemma \ref{modulo} we deduce $1=q\ast g$, that is, $g=q^{-1}\notin H^2(\partial\mathbb B)$.  Hence, $M_q \clK \subsetneq \clK$.
	
On the other hand, let $\clK$ be a closed subset of $L_s^2{(\partial\B)}$ such that  $ M_q \clK \subsetneq \clK$. Since $M_q$ is an isometry, we get that $M_q\clK$ is a closed subset of $\clK$. Hence, there exists $\varphi \in \clK$, with $\|\varphi\|_{L_s^2{(\partial \B)}}=1$, such that $\varphi$ is orthogonal to $ M_q\clK$ with respect to the $L_s^2(\partial \B)$ inner product. 
In particular, 
 such a $\varphi$ is orthogonal to $ M_q^k\varphi$ for any $k \geq 1$. Consider now the power expansion of $\varphi$, $\varphi(q)=\sum_{n\in \Z}q^{n}a_n$. Then, for any $k\geq 1$,
\[0=\langle  M_q^k \varphi, \varphi  \rangle_{ L_s^2(\p\B)}=\Big\langle \sum_{n\in \Z}q^{(n+k)}a_n, \sum_{n\in \Z}q^{n}a_n  \Big\rangle_{ L_s^2(\p\B)}=\sum_{n\in \Z}\overline{a_n}a_{n-k},\]
whereas, for any $k=-l<0$, 
\[0=\langle  \varphi,  M_q^l \varphi  \rangle_{ L_s^2(\p\B)}=\Big\langle \sum_{n\in \Z}q^{n}a_n, \sum_{n\in \Z}q^{(n+l)}a_n   \Big\rangle_{ L_s^2(\p\B)}=\sum_{n\in \Z}\overline{a_{n-l}}a_{n}=\sum_{n\in \Z}\overline{a_{n+k}}a_{n},\]
that is, for any $k\in \Z$, $k\neq 0$,
\begin{equation}\label{coeff}
\sum_{n\in \Z}\overline{a_{n}}a_{n-k}=0.
\end{equation} 
Then, for $\Sigma$-almost every $q\in \partial \B$,
\[\varphi^c*\widetilde{\varphi}(q)=\sum_{k\in \Z}q^{k}\sum_{n\in \Z}\overline{a_{n}}a_{n-k}=\sum_{n\in \Z}\overline{a_{n}}a_{n}=\|\varphi\|^2_{L_s^2(\partial \B)}=1.\]

Thanks to Lemma \ref{modulo} we get then that $|\varphi|=1$ $\Sigma$-almost everywhere on $\p\B$.

Let us conclude the proof showing that $\clK=\varphi*H^2(\partial\B)$.
On the one hand we have that the sequence $\{ M_q^n \varphi\}_{n\in \N}$ is an orthonormal sequence contained in $K$, but $\{M_q^n \varphi\}_{n\in \N}=\{q^n* \varphi\}_{n\in \N}=\{M_\varphi(q^n)\}_{n\in \N}$. Therefore this sequence is the image of the orthonormal basis $\{q^n\}_{n\in \N}$ of $H^2(\partial \B)$
through the isometry $M_\varphi$. Hence, $\varphi*H^2(\partial \B)=M_\varphi(H^2(\partial \B))$ is contained in $\clK$ and it is closed. On the other hand, suppose that $\varphi*H^2(\partial \B)\subsetneq \clK$ and 
consider $f\in \clK$ orthogonal to $\varphi*H^2(\partial \B)$. Then, for any $k\geq 0$, 
\[\langle  M_\varphi^\dagger f, q^k  \rangle_{ L_s^2(\p\B)}=\langle  f, M_\varphi q^k  \rangle_{ L_s^2(\p\B)}=0,\]
and, for any $k=-l<0$,
 \begin{align*}
 \langle  M_\varphi^\dagger f, q^{-l}  \rangle_{ L_s^2(\p\B)}&=\!\langle  f, M_\varphi q^{-l}  \rangle_{ L_s^2(\p\B)}=\!
 \langle  f, \varphi *q^{-l}  \rangle_{ L_s^2(\p\B)}
=\! \langle  f, q^{-l}\ast\varphi  \rangle_{ L_s^2(\p\B)}=\!
\langle M_q^lf,\varphi\rangle_{L_s^2(\p\B)}\!=\!0,
 \end{align*} 
where the last equality is due to the orthogonality of $\varphi$ and $ M_q^l (\clK)$.
Hence we have that $M_\varphi^\dagger f=0$ $\Sigma$-almost everywhere on $\partial\B$. Thus, $0=\varphi\ast\widetilde\varphi^c\ast f=f$ $\Sigma$-almost everywhere on $\p\B$ and, in particular, we conclude that $\mathcal K=\varphi\ast H^2(\p\B)$.

For the uniqueness, let $\varphi_1,\varphi_2$ be such that $\clK=\varphi_1*H^2(\partial \B)=\varphi_2*H^2(\partial \B)$. Then, exploiting  Lemma \ref{modulo}, we deduce that both 
$\widetilde\varphi^c_1\ast\varphi_2$ and $\widetilde\varphi^c_2\ast\varphi_1$ belong to $H^2(\p\B)$. Hence, also $(\widetilde\varphi^c_1\ast\varphi_2)^c=\varphi_2^c\ast\widetilde\varphi_1\in H^2(\p\B)$ and $(\widetilde\varphi^c_2\ast\varphi_1)^c=\varphi_1^c\ast\widetilde\varphi_2\in H^2(\p\B)$.

Using now the fact that for any slice function $f$ we have that $f\in H^2(\p\B)$ if and only if 
\[\widetilde f \in \widetilde H^2(\p \BB) :=  \Big\{ f\in L^2_s(\p\B) \,:\,f(q)= \sum_{n \in \N}q^{-n} a_{-n}, \,  \{a_{-n}\}_{n\in\N} \in {\ell^2(\N, \HH)} \Big\},\] 
we get that
$\widetilde{\varphi^c_1\ast\widetilde\varphi_2}=\widetilde\varphi^c_1\ast\varphi_2\in \widetilde H^2(\p\B)$.  So, $\widetilde\varphi^c_1\ast\varphi_2$ belongs both to $H^2(\p\B)$ and $\widetilde H^2(\p\B)$, thus $\widetilde\varphi^c_1\ast \varphi_2\equiv u$, with $u\in\HH$. Finally, applying again Lemma \ref{modulo}, we deduce that $\varphi_2=\varphi_1\ast u$ and, clearly, $|u|=1$ as we wished to show. 
\end{proof}

The proof of Theorem \ref{thm-Beurling}, Beurling's theorem for the quaternionic Hardy space, can be easily deduced from Theorem \ref{thm-simply-invariant}. 

We now exploit Theorem \ref{thm-Beurling} to prove the inner-outer factorization result. 
We recall that a function $g\in H^2(\p\B)$ is cyclic if 
$$
E_g:= \overline{\Span\big\{q^n\ast g, n\geq 0\big\}}= H^2(\p\B).
$$
It is easy to prove that $E_g$ is the smallest closed invariant subspace of $H^2(\p\B)$ containing $g$. Then, the following holds.
\begin{teo}\label{thm-fact-cyclic}
Let $f\in H^2(\p\B)$, $f\not\equiv 0$. Then $f$ has a factorization $f=\varphi*g$ where $\varphi$ is inner and $g$ is cyclic.

Moreover, this factorization is unique up to a unitary constant in the following sense: if $f=\varphi\ast g=\varphi_1\ast g_1$, then $\varphi_1=\varphi\ast u$ and $g_1=\overline{u}\ast g$ for some $u\in\HH$ such that $|u|=1$. 
\end{teo}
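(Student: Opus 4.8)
The plan is to deduce the factorization from Beurling's theorem (Theorem~\ref{thm-Beurling}) applied to the smallest closed invariant subspace generated by $f$, and then to read off cyclicity of the remaining factor from the fact that multiplication by an inner function is an isometry.

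First I would set $E_f=\overline{\Span\{q^n\ast f:\ n\ge 0\}}$, which, as recalled just before the statement, is the smallest closed $M_q$-invariant subspace of $H^2(\p\B)$ containing $f$; it is nonzero since $f=q^0\ast f\in E_f$ and $f\not\equiv 0$. Applying Theorem~\ref{thm-Beurling} to $E_f$ produces an inner function $\varphi\in H^\infty(\p\B)$ with $E_f=\varphi\ast H^2(\p\B)$, and since $f\in E_f$ we may write $f=\varphi\ast g$ for some $g\in H^2(\p\B)$. To see that $g$ is cyclic I would use the identity $q^n\ast f=q^n\ast\varphi\ast g=\varphi\ast(q^n\ast g)=M_\varphi(q^n\ast g)$, which gives $\Span\{q^n\ast f:\ n\ge 0\}=M_\varphi\big(\Span\{q^n\ast g:\ n\ge 0\}\big)$. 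Because $\varphi$ is inner, Corollary~\ref{isometria} says that $M_\varphi$ is a surjective isometry of $L^2_s(\p\B)$, hence a homeomorphism, and therefore commutes with the closed-span operation; thus $E_f=M_\varphi(E_g)=\varphi\ast E_g$. Comparing with $E_f=\varphi\ast H^2(\p\B)$ and cancelling the injective operator $M_\varphi$ yields $E_g=H^2(\p\B)$, i.e., $g$ is cyclic. This settles the existence.

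For the uniqueness, suppose $f=\varphi\ast g=\varphi_1\ast g_1$ with $\varphi,\varphi_1$ inner and $g,g_1$ cyclic. Running the previous computation for each factorization gives $E_f=\varphi\ast E_g=\varphi\ast H^2(\p\B)$ and, symmetrically, $E_f=\varphi_1\ast H^2(\p\B)$, so the uniqueness clause of Theorem~\ref{thm-Beurling} provides a quaternion $u\in\HH$ with $|u|=1$ and $\varphi_1=\varphi\ast u$. Substituting into $\varphi\ast g=\varphi_1\ast g_1=\varphi\ast(u\ast g_1)$ and cancelling $M_\varphi$ gives $g=u\ast g_1$; reading this at the level of power-series coefficients (the $n$-th coefficient of $u\ast g_1$ equals $u$ times the $n$-th coefficient of $g_1$) and using $u^{-1}=\overline u$, one obtains $g_1=\overline u\ast g$. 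Hence $\varphi_1=\varphi\ast u$ and $g_1=\overline u\ast g$, as required.

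The substantive content is carried entirely by Beurling's theorem; the only points that require a little care are the commutation of $M_\varphi$ with the closed span (which genuinely uses that $M_\varphi$ is an isometry, not merely bounded) and the manipulation of the $\ast$-product with the constant $u$, which must be performed on coefficients rather than on values. I therefore expect no real obstacle beyond this bookkeeping.
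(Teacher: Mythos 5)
Your proposal is correct and follows essentially the same route as the paper: apply Theorem~\ref{thm-Beurling} to $E_f$, factor $f=\varphi\ast g$, use the isometry property of $M_\varphi$ from Corollary~\ref{isometria} to conclude that $g$ is cyclic, and obtain uniqueness from the uniqueness clause of Beurling's theorem together with cancellation of $M_\varphi$. The only cosmetic difference is in the cyclicity step, where the paper transfers convergence of $f\ast p_n\to\varphi\ast h$ to $g\ast p_n\to h$ via the isometry, while you invoke the equivalent fact that the surjective isometry $M_\varphi$ commutes with closed spans.
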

\begin{proof}
	Let $f\in H^2(\p\B)$ and let $E_f$ be the smallest closed invariant subspace of $H^2(\p\B)$ containing $f$. If $E_f$ coincides with $H^2(\p\B)$, then we are done. Otherwise, thanks to Theorem \ref{thm-Beurling} there exists an inner function $\varphi\in H^\infty(\p\B)$ such that $E_f=\varphi*H^2(\p\B)$, and hence there exists $g\in H^2(\p\B)$ such that $f=\varphi*g$. Let us show that $E_g=H^2(\p\B)$.
	Consider a function $h\in H^2(\p\B)$. Then $\varphi*h\in \varphi*H^2(\p\B)=E_f$. Therefore, there exists a sequence of polynomials $p_n$ such that $f*p_n$ converges in $H^2(\p\B)$ to $\varphi*h$. But
	\[\|f*p_n-\varphi*h\|_{L_s^2(\p\B)}=\|\varphi*g*p_n-\varphi*h\|_{L_s^2(\p\B)}=\|\varphi*(g*p_n-h)\|_{L_s^2(\p\B)},\]
	thus, recalling that if $\varphi$ is inner, then, thanks to Corollary \ref{isometria}, $M_\varphi$ is an isometry for $L_s^2(\p\B)$, we conclude that $g*p_n$ converges in $H^2(\p\B)$ to $h$, and hence that $h\in E_g$. 
	
The uniqueness follows from the uniqueness of the inner function identifying $E_f$:
if $f=\varphi_1*g_1$, with $\varphi_1$ inner and $g_1$ cyclic, then
\begin{align*}
\Span\big\{q^n\ast f, n\geq 0\big\}&=\Span\big\{q^n\ast \varphi_1*g_1, n\geq 0\big\}=\Span\big\{\varphi_1*q^n\ast g_1, n\geq 0\big\}\\&=\varphi_1*\Span\big\{q^n\ast g_1, n\geq 0\big\}.
\end{align*} 
Hence $E_f=\varphi_1*H^2(\p\B)$ which, thanks to Beurling's theorem, implies that $\varphi_1=\varphi*u$, with $|u|=1$.
Therefore $g_1=u^{-*}*g=\bar{u}*g$.
\end{proof}

Now, we want to prove that a function $g$ is cyclic if and only if it is outer. We refer the reader to the introduction for the definition of outer functions. Here we recall that if $g\in H^2(\p\B)$ is an outer function, then $g(q)\neq 0$ for every $q\in\B$. In fact, from \cite{deFGS} we know that $g$ admits the factorization $g(q)=h\ast b(q)$ where $h(q)\neq 0$ for any $q\in\B$ and $b$ is a Blaschke product. In particular, $|b(q)|=1$ for $\Sigma$-almost every $q\in\p\B$, thus $|g|=|h|$  $\Sigma$-almost everywhere on $\p\B$. Since $g$ is outer, it holds $|g(q)|\geq |h(q)|>0$ for any $q\in\B$, therefore $g$ cannot vanish in $\B$.

\begin{teo}\label{prop-outer}
 Let $g\in H^2(\p\B)$. Then, the following are equivalent:
 \begin{itemize}
 	\item[(i)] $g$ is cyclic, i.e., $E_g=H^2(\p\B)$;
 	\item[(ii)] for any $f\in H^2(\p\B)$ such that $f*g^{-*}\in L^2_s(\p\B)$, we have that $f*g^{-*}\in H^2(\p\B)$;
 	\item[(iii)] $g$ is outer, i.e., if $f\in H^2(\p\B)$ is such that $|f|=|g|$ \ $\Sigma$-almost everywhere on $\p\B$, then $|f(q)|\leq|g(q)|$ for any $q\in \B$.	
 \end{itemize}
\end{teo}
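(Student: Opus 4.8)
The plan is to prove the cycle of implications (iii)$\,\Rightarrow\,$(ii)$\,\Rightarrow\,$(i)$\,\Rightarrow\,$(iii), using Beurling's theorem (Theorem~\ref{thm-Beurling}), Corollary~\ref{isometria}, and the canonical factorization theory for $H^2(\p\B)$ developed in \cite{deFGS}. Throughout I would use freely the slice analogues of the Nevanlinna class $N$ and the Smirnov class $N^+$, of Smirnov's theorem in the form $N^+\cap L^2_s(\p\B)=H^2(\p\B)$, and of the factorization of a zero-free $g\in H^2(\p\B)$, $g\not\equiv0$, as a $*$-product $g=S*g_o$ of a (possibly constant) singular inner function $S$ with an outer function $g_o$. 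For the first implication, \emph{(iii)$\,\Rightarrow\,$(ii)}: if $g$ is outer then $g\in N^+$ and, since $\log|g|\in L^1(\p\B)$, the reciprocal $g^{-*}$ is again outer, so $g^{-*}\in N^+$; hence for $f\in H^2(\p\B)\subseteq N^+$ the product $h:=f*g^{-*}$ lies in $N^+$ (one checks on each slice that $N^+$ is stable under the $*$-product), and if moreover $h\in L^2_s(\p\B)$ then $h\in N^+\cap L^2_s(\p\B)=H^2(\p\B)$ by Smirnov's theorem.

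For \emph{(ii)$\,\Rightarrow\,$(i)}, observe that $E_g$ is a closed $M_q$-invariant subspace of $H^2(\p\B)$, so by Theorem~\ref{thm-Beurling} there is an inner $\varphi$ with $E_g=\varphi*H^2(\p\B)$, and $g=\varphi*g_1$ for some $g_1\in H^2(\p\B)$; then $g_1*g^{-*}=\varphi^{-*}$. Since $\varphi$ is inner, $|\varphi^{-*}|=|\varphi^s|^{-1}|\varphi^c|=1$ $\Sigma$-almost everywhere on $\p\B$ by Propositions~\ref{zerisimInner} and~\ref{modulophic}, so $\varphi^{-*}\in L^\infty_s(\p\B)\subseteq L^2_s(\p\B)$, and condition (ii) applied with $f=g_1$ forces $\varphi^{-*}\in H^2(\p\B)$. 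Thus $\varphi^{-*}$ is slice regular on $\B$ and the identity $\varphi*\varphi^{-*}=1$ extends to all of $\B$; the slice Poisson bound gives $|\varphi|\le1$ and $|\varphi^{-*}|\le1$ on $\B$, and evaluating $\varphi*\varphi^{-*}=1$ at the origin yields $|\varphi(0)|=1$, so the maximum modulus principle for slice regular functions forces $\varphi$ to be a unitary constant and $E_g=H^2(\p\B)$.

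For \emph{(i)$\,\Rightarrow\,$(iii)}, a cyclic $g$ is necessarily zero-free on $\B$: any $g*p$ vanishes wherever $g$ does, so otherwise $E_g$ would be contained in the kernel of a point evaluation. Writing $g=S*g_o$ as above (the Blaschke factor of a zero-free function being constant, by \eqref{zerosetsim} applied to $g^s$), the two implications already proven show that the outer function $g_o$ is cyclic, so $\{g_o*p:p\text{ a quaternionic polynomial}\}$ is dense in $H^2(\p\B)$; applying the isometry $M_S$ (Corollary~\ref{isometria}) gives $E_g=\overline{\{S*g_o*p\}}=S*H^2(\p\B)$. Cyclicity of $g$ then forces $S*H^2(\p\B)=H^2(\p\B)$, and this makes $S$ a unitary constant: if $S$ were nonconstant we would have $1\notin S*H^2(\p\B)$, since $1=S*F$ with $F\in H^2(\p\B)$ would give $S^{-*}=F\in N^+$, contradicting $S^{-*}\notin N^+$. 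Hence $g=S*g_o$ is outer.

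The step I expect to be most delicate is the interface with the Nevanlinna--Smirnov dichotomy, which is really the heart of the statement: a zero-free element of $H^2(\p\B)$ fails to be outer exactly when it carries a nonconstant singular inner factor, and this is detected by whether $g^{-*}$ belongs to $N^+$ or only to $N\setminus N^+$. Making this precise requires the slice versions of Smirnov's theorem and of the singular-inner-times-outer factorization (which I am importing from \cite{deFGS}), together with some care about the non-commutativity of the $*$-product when manipulating expressions such as $g_1*g^{-*}=\varphi^{-*}$ and the associated maps $T_{\varphi^c}$.
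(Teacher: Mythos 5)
Your cycle (iii)$\Rightarrow$(ii)$\Rightarrow$(i)$\Rightarrow$(iii) is organized sensibly, and the middle implication is essentially sound and close to the paper's argument: writing $E_g=\varphi*H^2(\p\B)$ via Theorem \ref{thm-Beurling}, feeding $f=g_1$ into (ii) to get $\varphi^{-*}\in H^2(\p\B)$, and concluding that an inner function whose $*$-reciprocal lies in $H^2(\p\B)$ is a unitary constant (the paper cites \cite[Proposition 5.14]{deFGS} for the step you prove by hand). The problem is that your two remaining implications rest on machinery that is neither in this paper nor in the references it builds on. For (iii)$\Rightarrow$(ii) you invoke a slice Smirnov class $N^+$, Smirnov's theorem $N^+\cap L^2_s(\p\B)=H^2(\p\B)$, stability of $N^+$ under the $*$-product, and the claim that $g$ outer implies $g^{-*}$ outer; none of these is established in the quaternionic setting used here, and the last one is far from obvious with the modulus-domination definition of outer adopted in this paper (\cite[Definition 5.1]{deFGS}). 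For (i)$\Rightarrow$(iii) you factor a zero-free $g$ as $S*g_o$ with $S$ singular inner and $g_o$ \emph{outer}: the only factorization available from \cite{deFGS} is $g=h*b$ with $b$ a Blaschke product and $h$ zero-free, and the singular-inner-times-outer decomposition is essentially the inner--outer factorization (Theorem \ref{thm-factorization}) that the paper \emph{deduces from} the present theorem via Theorem \ref{thm-fact-cyclic}. Assuming it here is circular within this development, or at best an unproven import whose proof would amount to redoing the paper's main result by other means.

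For comparison, the paper's proof avoids all of this: (iii)$\Rightarrow$(ii) follows in two lines because an outer $g$ is zero-free on $\B$ (a consequence of the Blaschke factorization already recorded before the theorem), so $f*g^{-*}$ is slice regular on $\B$ and its membership in $L^2_s(\p\B)$ upgrades to $H^2(\p\B)$; (i)$\Leftrightarrow$(ii) is proved by estimating the negative Fourier coefficients of $f*g^{-*}$ against an approximating sequence $g*p_n\to 1$, using the Representation Formula and Cauchy--Schwarz; and (i)$\Rightarrow$(iii) is a direct approximation argument: pick polynomials with $g*p_{M_j}\to f$, pass to a.e.\ boundary convergence, use $|f|=|g|$ and the bijectivity of $T_{g^c}$ off the null set $Z_{g^s}$ (Proposition \ref{zerisimH2}) to get $|1-|p_{M_j}||<\varepsilon$ a.e.\ on $\p\B$, and finish with the maximum modulus principle inside $\B$. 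If you want to salvage your outline without the Nevanlinna--Smirnov input, you would need to replace your (iii)$\Rightarrow$(ii) by the zero-freeness argument and find a proof of (i)$\Rightarrow$(iii) that does not presuppose the canonical factorization; the paper's approximation argument is the natural candidate.
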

\begin{proof}
	Let us first show that $(i)$ is equivalent to $(ii)$. Suppose that $g$ is cyclic and let $f\in H^2(\p\B)$ be such that $f*g^{-*}$ is in $L^2_s(\p\B)$. We want to show that $\langle f*g^{-*},q^k \rangle_{ L^2_{s} (\p\B)}=0$ for any $k<0$. Consider the sequence of polynomials $\{p_n\}_{n\in\N}$ such that $g*p_n$ converges to the constant function $1$ in $H^2(\p\B)$. Notice that, for any $I \in \s$ and any $k\in\Z$, setting $\p\mathbb B_I= \p\mathbb B\cap L_I$,  we have
	\begin{align*}
|\langle f*g^{-*}-f*p_n,q^k \rangle_{ L^2_{s} (\p\B)}|&=\left|\frac{1}{2\pi}\int_0^{2\pi} e^{-tkI}(f*g^{-*}(e^{tI})-f*p_n(e^{tI}))dt\right|\\
&\leq\frac{1}{2\pi}\int_0^{2\pi}\left| f*g^{-*}*(1-g*p_n(e^{tI}))\right|dt\\
&=\frac{1}{2\pi}\int_0^{2\pi}\chi_{\{\p\B_I \setminus Z_{f*g^{-*}}\}}(e^{tI})\left| f*g^{-*}(e^{tI})\right|\left|1-g*p_n(T_{(f*g^{-*})^c}(e^{tI}))\right|dt\\
&\leq 2 \|f\ast g^{-\ast}\|_{L^2_s(\p\B)}\|1-g\ast p_n\|_{L^2_s(\p\B)}
\end{align*} 
where the last inequality follows from the Representation Formula \eqref{repfor2} {and Cauchy--Schwarz inequality}. In particular we deduce that
for negative values of $k$, it holds
$$
0=\lim_{n\to+\infty}|\langle f*g^{-*}-f*p_n,q^k \rangle_{ L^2_{s} (\p\B)}|=|\langle f\ast g^{-\ast}, q^k\rangle_{L^2_s(\p\B)}|
$$
and we can conclude that $f\ast g^{-\ast}\in H^2(\p\B)$.

Suppose now that condition $(ii)$ holds, and consider the factorization of $g=\varphi*f$ with $\varphi$ inner and $f$ cyclic.

Thanks to Lemma \ref{modulo}, $\|\varphi^{-*}\|_{L^2_s(\p\B)}=\|\widetilde{\varphi^c}\|_{L^2_s(\p\B)}=\|\varphi\|_{L^2_s(\p\B)}$, so $\varphi^{-*}\in L^2_s(\p\B)$. But $\varphi^{-*}=f*g^{-*}$ which, by $(ii)$, implies that 
$\varphi^{-*}\in H^2(\p\B)$. Thus, both $\varphi$ and $\varphi^{-\ast}$ are in $H^2(\p\B)$. Thanks to \cite[Proposition 5.14]{deFGS} that guarantees that if both a function and its $*$-reciprocal belong to $H^2(\p\B)$, then the function is outer, we have that the inner function $\varphi$ is also an outer function. Therefore, it is a constant $u$ of modulus 1. In particular, $g=u*f$ and hence it is obviously cyclic.

Let us now show that condition $(iii)$ implies condition $(ii)$. Let $f\in H^2(\p\B)$ be such that $f*g^{-*}\in L^2_s(\p\B)$. The fact that $g$ is outer, yields that $g(q)\neq 0$ for any $q\in\B$. Hence $f*g^{-*}$ is a slice regular function in $\B$, bounded in $L^2_s(\p\B)$-norm and thus it belongs to $H^2(\p\B)$. 
At last we show that $(i)$ implies $(iii)$ and this will conclude the proof. Let $f\in H^2(\p\B)$ be such that $|f|=|g|$ $\Sigma$-almost everywhere on $\p\B$. Since $g$ is cyclic, then there exists a sequence $\{p_M\}$ of quaternionic polynomials of the form $p_M(q)=\sum_{n=0}^M q^n \alpha_{M,n}$ such that
\[\lim_{M\to\infty}\|f-g\ast p_M\|_{L_s^2(\p\B)}=0.\]
Then, we can select a subsequence $\{p_{M_j}\}$ such that $f(q)= \lim_{j\to\infty }g\ast p_{M_j}(q)$ for $\Sigma$-almost every $q\in\p\B$. We remark also that $f(q)= \lim_{j\to\infty }g\ast p_{M_j}(q)$ for any $q\in\B$ since $H^2(\p\B)$ is a reproducing kernel Hilbert space, thus
\[|f(q)-g\ast p_{M_j}(q)|\le C\|f-g*p_{M_j}\|_{L^2_s(\p\B)},\]
for some positive constant $C$.
Since $g\in H^2(\p\B)$, then $g$ is $\Sigma$-almost everywhere non-vanishing on $\p\B$  (see \cite[Proposition 4.4]{deFGS}),  thus 
$$
|g(q)|=|f(q)|=|g(q)|\lim_{j\to\infty}|p_{M_j}(T_{g^c}(q))|,
$$
for $\Sigma$-almost every $q\in\p\B$. That is, for every fixed $\varepsilon>0$ there exists $j_0=j_0(\varepsilon)$ such that for every $j>j_0$ it holds $|1-|p_{M_j}(T_{g^c}(q))||<\varepsilon$. Since $g\in H^2(\p\B)$, from \eqref{op-T-phi} and Proposition \ref{zerisimH2}  we deduce that it actually holds $|1-|p_{M_j}(q)||<\varepsilon$ for $\Sigma$-almost every $q\in\p\B$. Finally, exploiting the maximum modulus principle  for slice regular functions  (see \cite{libroGSS}), we get that for any $q\in\B$, it holds
$$
|f(q)|=|g(q)|\lim_{j\to\infty}|p_{M_j}(T_{g^c}(q))|<(1+\varepsilon)|g(q)|.
$$
Since this holds for any $\varepsilon>0$ we finally get $|f(q)|\leq |g(q)|$ for any $q\in\B$ as we wished to show and the proof is concluded.

\end{proof}

The proof of Theorem \ref{thm-factorization} clearly follows from Theorems \ref{thm-fact-cyclic} and \ref{prop-outer}.

\subsection*{Acknowledgments}
{The authors wish to thank  A. Altavilla and C. Stoppato for useful discussions about the topic. The authors wish to thank also the anonymous referees for useful suggestions and for pointing out some references.

The second author wish to thank the Institut Montpelli\'erain Alexandre Grothendieck where part of this project was carried out.}

\bibliography{quaternions-bib}
\bibliographystyle{amsalpha}
\Addresses
\end{document}